%%%%%%%%%%%%%%%%%%%%%%%%%%%%%%%%%%%%%%%%%%%%%%%%%%%%%%%%
%%          This is the LaTeX2e file for
%%  Symplectic resolutions, Lefschetz property and formality
%%     Gil Cavalcanti, Marisa Fernandez and Vicente Munoz
%%%%%%%%%%%%%%%%%%%%%%%%%%%%%%%%%%%%%%%%%%%%%%%%%%%%%%%
\date{October 3, 2007}
%%%%%%%%%%%%%%%%%%%%%%%%%%%%%%%%%%%%%%%%%%%%%%%%%%%%%%%%

\documentclass[11pt,leqno]{article}
\usepackage{amssymb,amsmath,amsfonts,amsthm,xypic}
\usepackage{graphicx}
\usepackage{array}

 \oddsidemargin 5pt
 \evensidemargin .5in
 \marginparwidth 1in
 \marginparsep 7pt
 \topmargin .0in
 \headheight 12pt
 \headsep .275in
% \footheight 12pt
 \footskip 30pt
 \textheight 23cm
 \textwidth 16cm

\theoremstyle{plain}
\newtheorem{theorem}{Theorem}[section]

\newtheorem{lemma}[theorem]{Lemma}
\newtheorem{proposition}[theorem]{Proposition}

\theoremstyle{definition}
\newtheorem{example}[theorem]{Example}
\newtheorem{definition}[theorem]{Definition}
\newtheorem{remark}[theorem]{Remark}

\renewcommand{\a}{\alpha}

\newcommand{\PP}{{\mathbb{P}}}
\newcommand{\ZZ}{{\mathbb{Z}}}
\newcommand{\CC}{{\mathbb{C}}}

\newcommand{\RR}{{\mathbb{R}}}

\newcommand{\into}{\longrightarrow}

\newcommand{\inc}{\hookrightarrow}

\newcommand{\la}{\langle}
\newcommand{\ra}{\rangle}
\newcommand{\bd}{\partial}

\newcommand{\del}{\partial}

\newcommand{\bV}{\bigwedge V}

\newcommand{\M}{\mathcal{M}}
\newcommand{\A}{\mathcal{A}}
\newcommand{\B}{\mathcal{B}}

\newcommand{\mc}[1]{\mathcal{#1}}
\newcommand{\IP}[1]{\langle #1\rangle}
\renewcommand{\iff}{if and only if}
\newcommand{\wrt}{with respect to}
\newcommand{\nhood}{neighbourhood}

%%%%%%%% comments

%%%%%%%% courant bracket

\begin{document}

\title{Symplectic resolutions, Lefschetz property and formality}

\author{Gil R. Cavalcanti, Marisa Fern\'andez and Vicente Mu\~noz}

\maketitle

\begin{abstract}
We introduce a method to resolve a {\em symplectic orbifold}
$(M,\omega)$ into a smooth symplectic manifold $(\tilde
M,\tilde\omega)$. Then we study how the formality and the
Lefschetz property of $(\tilde M,\tilde\omega)$ are compared with
that of $(M,\omega)$. We also study the formality of the
symplectic blow-up of $(M,\omega)$ along symplectic submanifolds
disjoint from the orbifold singularities. This allows us to
construct the first example of a simply connected compact
symplectic manifold of dimension $8$ which satisfies the Lefschetz
property but is not formal, therefore giving a counter-example to
a conjecture of Babenko and Taimanov.
\end{abstract}

%\tableofcontents

\section{Introduction} \label{sec:introduction}

In \cite{Merk}, Merkulov proved that for a compact symplectic manifold %$(M^{2n},\omega)$
the Lefschetz property %, i.e., that the maps $$[\omega]^{k}:H^{n-k}(M) \into H^{n+k}(M) $$ are isomophisms,%
is equivalent to the $d\delta$-lemma, a property similar to the
$dd^c$-lemma for K\"ahler manifolds. Later Babenko and Taimanov
studied formality of symplectic manifolds in \cite{BT2}.  There,
they produced families of non-formal symplectic manifolds in
dimensions strictly greater $8$, all of which failed to satisfy
the Lefschetz property. Due to the fact that the ordinary
$dd^c$-lemma implies formality \cite{DGMS}, they were led to
conjecture that the $d\delta$-lemma (or equivalently the Lefschetz
property) implied formality of symplectic manifolds.

Using symplectic blow up, the first author proved this conjecture
false \cite{Cav2} in all dimensions strictly greater than $2$ and,
for simply connected spaces, in all dimensions strictly greater
than $8$. Further, due to a well known result of Miller and
Neisendorfer \cite{Mi78}, any simply connected manifold of
dimension $6$ or less is formal. Hence the only case where the
conjecture still stood was for simply connected symplectic
$8$-manifolds. As Miller's result suggests, the requirements that
the manifold is simply connected and $8$-dimensional are strong
contraints. Indeed, only recently, in \cite{FM4}, were the first
examples of non-formal simply connected symplectic $8$-manifolds
produced. Now we prove that the conjecture does not hold in $8$
dimensions either, therefore completing the study of the
relationship between the Lefschetz property and formality.

To show that there is no relation between those properties, we
construct an example by merging  and improving  on techniques from
\cite{Cav2} and \cite{FM4}. The tool we use to detect
non-formality is not Massey products, but a new product which
depends on an even cohomology class $a$ and which we call
$a$-Massey product. The method for construction of new symplectic
manifolds is the {\it symplectic resolution of singularities}, in
the spirit of \cite{FM4}  as well as symplectic blow-up. Putting
these together, we study in detail how the $a$-Massey products
products and the Lefschetz property behave under symplectic
blow-up and under symplectic resolution of singularities.

The way we construct our example consists in taking a quotient of
a non-formal symplectic manifold by a (non free) action of a
finite group so that the resulting manifold is a symplectic
orbifold with nontrivial $a$-Massey product. Then we blow up this
orbifold along suitable submanifolds to produce a non-formal
orbifold which satisfies the Lefschetz property and finally we
resolve the isolated symplectic orbifold singularities. The
resulting smooth manifold is a counter-example to the
Babenko--Taimanov conjecture.

This paper is organized as follows. In Section \ref{sec:formality}
we introduce new obstructions to formality called $a$-Massey
products and study their properties. There we also study formality
of orbifolds and show that the minimal model for the topological
space underlying an orbifold is given by the minimal model for the
algebra of orbifold differential forms. Therefore, similarly to
the case of manifolds, in order to check formality of an orbifold
one can simply work with differential forms, instead of piecewise
linear forms on some triangulation.

In Section \ref{sec:resolutions}, we introduce the concept of a
symplectic resolution and show that any symplectic orbifold with
isolated singularities can be resolved into a smooth symplectic
manifold. Our method of resolution of singularities of symplectic
orbifolds works in more cases than that of \cite{NiPas}. Then, we
study the behaviour of $a$-Massey products and the Lefschetz
property under resolutions. We show that both are preserved by
resolution of orbifold singularities.

In Section \ref{sec:blow-up}, we recall results about the
behaviour of the Lefschet property under symplectic blow-up and
give conditions for $a$-Massey products to be preserved under
blow-up. Finally, in the last section we put these ingredients
together to produce the counter-example to  the Babenko--Taimanov
conjecture.

\medskip

\noindent {\bf Acknowledgments.} This work has been partially
supported through grants MCyT (Spain) MTM2004-07090-C03-01 and
MTM2005-08757-C04-02 as well as EPSRC (UK).

\medskip
%%%%%%%%%%%%%%%%%%%%%%%%%%%%%%%%%%%%%%%%%%%%%%%%%%%%%%
\section{Formality and $a$-Massey products} \label{sec:formality}
%%%%%%%%%%%%%%%%%%%%%%%%%%%%%%%%%%%%%%%%%%%%%%%%%%%%%%

%%%%%%%%%%%%%%%%%%%%%%%%%%%%%%%%%%%%%%%%%%%%%%%%%%%%
\subsection{Formality of differential graded algebras}
%%%%%%%%%%%%%%%%%%%%%%%%%%%%%%%%%%%%%%%%%%%%%%%%%%%%

In this section we review the notion of formality \cite{Sul,DGMS}
and Massey products, which are well known  obstructions
to formality.
Then we introduce a new product which depends on an even
cohomology class and is similar to Massey products. This new
product also provides obstructions to formality, much in the
spirit of Massey products, but in some situations they are simpler to compute than
higher order Massey products. We finish with
some comments about
%%formality
the formality of manifolds and orbifolds.

We work with differential graded commutative algebras, or DGAs,
over the field  of real numbers, $\RR$. We denote the degree of an
element $a$ of a DGA by $|a|$. A DGA $(\A,d)$ is {\it minimal\/}
if:
\begin{enumerate}
 \item $\A$ is free as an algebra, that is, $\A$ is the free
 algebra $\bigwedge V$ over a graded vector space $V=\oplus V^i$, and
 \item there exists a collection of generators $\{ a_\tau,
 \tau\in I\}$, for some well ordered index set $I$, such that
 $|a_\mu|\leq |a_\tau|$ if $\mu < \tau$ and each $d
 a_\tau$ is expressed in terms of preceding $a_\mu$ ($\mu<\tau$).
 This implies that $da_\tau$ does not have a linear part, i.e., it
 lives in $\bV^{>0} \cdot \bV^{>0} \subset \bV$.
\end{enumerate}

Given a differential algebra $(\A,d)$, we denote its cohomology by
$H(\A)$. The cohomology of a differential graded algebra $H(\A)$
is naturally a DGA with the product induced by that on $\A$ and
with differential identically zero. The DGA $\A$ is {\it
connected} if $H^0(\A)=\RR$.

A differential algebra $({\cal M},d)$ is a {\it minimal model} of
$(\A,d)$ if $({\cal M},d)$ is minimal and there exists a morphism
of differential graded algebras $\rho\colon {({\cal
M},d)}\longrightarrow {(\A,d)}$ inducing an isomorphism
$\rho^*\colon H({\cal M})\longrightarrow H(\A)$ in cohomology.
In~\cite{H} Halperin proved that any connected differential
algebra $(\A,d)$ has a minimal model unique up to isomorphism.

A DGA  ${\cal A}$ with minimal model $\M$ is {\it formal} if there
is a morphism of differential algebras $\psi\colon {{\cal
M}}\longrightarrow H({\cal A})$ which induces an isomorphism in
cohomology. In this case $\M$ is simultaneously the minimal model
for $\A$ and $H(\A)$.

In order to detect non-formality, instead of computing the minimal
model, which usually is a lengthy process, we can use Massey
products, which are obstructions to formality. The simplest type
of Massey product is the triple (also known as ordinary) Massey
product, which we define next.

Let $\A$ be a DGA  and $a_i \in \A$, $1 \leq i\leq 3$, be three
closed elements  such that $a_1\wedge a_2$ and $a_2\wedge a_3$ are
exact. The {\it (triple) Massey product} of the $a_i$ is the set
  $$
  \langle a_1,a_2,a_3 \rangle  = \{
  [ a_1 \wedge a_{2,3}+(-1)^{|a_1|+1} a_{1,2}
  \wedge a_3] \ |\, da_{1,2} = a_1\wedge a_2,
  \ da_{2,3} = a_2\wedge a_3\} \subset
  H^{|a_1|+|a_2|+ |a_3| -1}(\A),
  $$
where $|a_i|$ is the degree of $a_i$. This set depends only on the
cohomology classes of the $a_i$ and not on the $a_i$ themselves,
hence this expression  also defines a product for cohomology
classes\footnote{In the literature it is also usual to call the
induced product in cohomology Massey product. This difference is
purely semantic and does not change any of the arguments used in
the paper.}. Given  $a_{1,2}$ and $a_{2,3}$  as above, we can add
any closed elements $\alpha_{1,2}$ and $\alpha_{2,3}$  to them and
we still have  the equalities
 $$
 d(a_{1,2} +  \alpha_{1,2}) = a_1 \wedge a_2 \qquad \mbox{ and }
 \qquad  d(a_{2,3} +  \alpha_{2,3}) = a_2 \wedge a_3,
 $$
hence we see that $\langle a_1,a_2,a_3 \rangle$ is a set of the form $c+ ([a_1]
\wedge H^{|a_2|+|a_3|-1}(\A) + H^{|a_1|+|a_2|-1}(\A)\wedge
[a_3])$. So the Massey product  gives a well-defined element in
  $$
  \frac{H^{|a_1|+|a_2|+|a_3|-1}(\A)}{[a_1]
  \wedge H^{|a_2|+|a_3|-1}(\A) + H^{|a_1|+|a_2|-1}(\A)\wedge [a_3]} \; .
  $$
We say that $\langle a_1,a_2,a_3 \rangle$ is {\it trivial} if
$0\in \langle a_1,a_2,a_3 \rangle$. The {\it indeterminacy} of the
Massey product is the set
 $$
 \{c-c'|\,c,c' \in \la a_1,a_2,a_3\ra\}=[a_1]
 \wedge H^{|a_2|+|a_3|-1}(\A) + H^{|a_1|+|a_2|-1}(\A)\wedge [a_3].
  $$

Now we move on to the definition of higher Massey products
(see~\cite{TO}). Given $a_{i}\in \A$, $1\leq i\leq n$, $n\geq 3$,
the Massey product $\la a_{1},a_{2},\ldots,a_{n}\ra$, is defined
if there are elements  $a_{i,j}$ on $\A$, with $1\leq i\leq j\leq
n$, except for the case $(i,j)=(1,n)$, such that
 \begin{equation}\label{eqn:gm}
 \begin{aligned}
 a_{i,i}&= a_i,\\
 d\,a_{i,j}&= \sum\limits_{k=i}^{j-1} \overline{a_{i,k}}\wedge
 a_{k+1,j},
 \end{aligned}
 \end{equation}
where $\bar a=(-1)^{|a|} a$. Then the {\it Massey product} is
 $$
 \la a_{1},a_{2},\ldots,a_{n} \ra =\left\{
 \left[\sum\limits_{k=1}^{n-1} \overline{a_{1,k}} \wedge
 a_{k+1,n}\right] \ | \ a_{i,j} \mbox{ as in (\ref{eqn:gm})}\right\}
 \subset H^{|a_{1}|+ \ldots +|a_{n}|
 -(n-2)}(\A)\, .
 $$
We say that the Massey product is {\it trivial} if $0\in \la
a_{1},a_{2},\ldots,a_{n,}\ra$. Note that for $\la
a_{1},a_{2},\ldots,a_{n}\ra$ to be defined it is necessary that
the lower order Massey products $\la a_{1},\ldots,a_{i}\ra$ and
$\la a_{i+1},\ldots,a_{n}\ra$ with $2 < i < n-2$ are defined and
trivial. As before, the indeterminacy of the Massey product is
 $$
 \{c - c'|\,c,c' \in  \la a_{1},a_{2},\ldots,a_{n}\ra\}.
 $$
However, in contrast with the triple products, in general there is
no simple description of this set.

The relevance of Massey products to formality comes from the
following well known result.

\begin{theorem}[\cite{DGMS,TO}] \label{theo:criterio1}
A DGA which has a non-trivial Massey product  is not formal.
\end{theorem}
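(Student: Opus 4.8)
The plan is to show that if $\A$ is formal, then every Massey product of $\A$ is trivial; the statement then follows by contraposition. The key idea is that formality provides a morphism $\psi\colon \M \to H(\A)$ of DGAs from the minimal model inducing an isomorphism in cohomology, and we use this to transfer the vanishing of Massey products from $H(\A)$ — where it is obvious, since the differential is zero — back to $\A$.

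First I would recall that Massey products are invariant, as sets modulo the appropriate indeterminacy, under quasi-isomorphisms of DGAs. More precisely, if $f\colon (\B,d) \to (\C,d)$ is a morphism of DGAs inducing an isomorphism in cohomology, and $b_1,\dots,b_n$ are closed elements of $\B$ for which the Massey product $\la b_1,\dots,b_n\ra$ is defined, then $\la f(b_1),\dots,f(b_n)\ra$ is defined and $f^*$ maps the former into the latter; in particular one is trivial iff the other is. This is a standard diagram-chase: one simply applies $f$ to a defining system $\{a_{i,j}\}$ as in (\ref{eqn:gm}), using that $f$ commutes with $d$ and $\wedge$, to get a defining system for the image product, and conversely one lifts a defining system through $f$ using surjectivity on cohomology together with the fact that $f$ preserves exactness. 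Second, I would observe that in $H(\A)$, equipped with the zero differential, every Massey product that is defined necessarily contains $0$: the conditions for $\la [a_1],\dots,[a_n]\ra$ to be defined force all the relevant products of cohomology classes to vanish, so one may take every $a_{i,j}$ with $i<j$ to be $0$, and then $\sum_k \overline{a_{1,k}}\wedge a_{k+1,n} = 0$.

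Now I would assemble the argument. Suppose $\la a_1,\dots,a_n\ra$ is a Massey product in $\A$ that is defined but not trivial, i.e. $0 \notin \la a_1,\dots,a_n\ra$. By formality we have quasi-isomorphisms $\rho\colon \M \to \A$ and $\psi\colon \M \to H(\A)$. Pull the classes $[a_i]$ back through $\rho^*$ to classes in $H(\M)$; the Massey product $\la \cdot \ra$ in $\M$ on these classes is defined (invariance under $\rho$) and nontrivial. Pushing forward through $\psi$, the corresponding Massey product in $H(\A)$ is defined and nontrivial — but this contradicts the second observation, since every defined Massey product in a DGA with zero differential is trivial. Hence no nontrivial Massey product can exist in a formal DGA, which is the claim.

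I expect the main obstacle to be the careful bookkeeping in the invariance statement: one must check that a morphism inducing a cohomology isomorphism carries a defining system $\{a_{i,j}\}$ to a defining system for the image, and — for the lifting direction — that one can build a defining system upstairs mapping to a prescribed one downstairs, matching the sign conventions $\bar a = (-1)^{|a|}a$ in (\ref{eqn:gm}) and handling the indeterminacy subgroups correctly. None of this is deep, but it is where all the content lies; the rest is formal. (This is, of course, exactly the argument of \cite{DGMS,TO}.)
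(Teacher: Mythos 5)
The paper offers no proof of this theorem --- it is quoted from \cite{DGMS,TO} --- so there is nothing internal to compare against; your argument is the standard one from those references and is correct in outline. The two ingredients are right: in $(H(\A),0)$ any defined Massey product contains $0$ (the all-zero defining system works once the products $[a_i][a_{i+1}]$ are forced to vanish), and quasi-isomorphism invariance transports this back to $\A$ through $\M$. The one point where I would push back is the description of the invariance statement as ``a standard diagram-chase.'' The forward direction (applying $f$ to a defining system) is immediate, but your argument needs the lifting direction for \emph{both} arrows $\rho\colon\M\to\A$ and $\psi\colon\M\to H(\A)$ (to know the product is defined and nontrivial in $\M$, and then that triviality in $H(\A)$ forces triviality in $\M$), and for $n\geq 4$ this is not a one-line lift: if you merely correct each candidate $b_{i,j}$ by a closed element so that $f(b_{i,j})-c_{i,j}$ is exact, the resulting error terms such as $\overline{de_{i,k}}\wedge c_{k+1,j}$ involve the non-closed elements $c_{k+1,j}$ and need not be exact, so the nested conditions $db_{i,j}=\sum\overline{b_{i,k}}\wedge b_{k+1,j}$ are not automatically satisfiable with the prescribed images. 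One needs a genuine induction on $j-i$ in which the defining system downstream is adjusted as well; this is exactly what is carried out in \cite{TO} (and in May's treatment of matric Massey products). With that lemma proved or properly invoked your proof is complete, and you correctly identify it as the locus of all the content.
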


%%%%%%%%%%%%%%%%%%%%%%%%%%%%%%%%%%%%%%%%%%%%%%%%%%%%%%%%%%%
\subsection{$a$-Massey products}
%%%%%%%%%%%%%%%%%%%%%%%%%%%%%%%%%%%%%%%%%%%%%%%%%%%%%%%%%%%

Next, we introduce another obstruction to the formality, which
generalizes the triple Massey products and  has the advantage of
being simpler for computations than the higher order Massey
products.

\begin{proposition}\label{prop:GMassey}
Let $\A$ be a DGA and  let $a, b_1, \ldots, b_n \in \A$ be
closed elements such that the degree $|a|$ of $a$ is even  and $a\wedge b_i$ is
exact, for all $i$. Let $\xi_i$ be any form such that $d\xi_i = a
\wedge b_i$. Then the form
 \begin{equation}\label{eq:c}
 c= \sum_i \overline{\xi_1}\wedge\ldots \wedge \overline{\xi_{i-1}}
 \wedge b_i \wedge \xi_{i+1} \wedge \ldots \wedge \xi_n
 \end{equation}
is closed, where  $\overline{\xi} = (-1)^{|\xi|}\xi$.
\end{proposition}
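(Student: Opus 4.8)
The plan is to prove $dc=0$ by a direct computation, organizing the terms of $dc$ into cancelling pairs. Two elementary facts drive everything. First, since $|a|$ is even, $a$ is \emph{central}: $a\wedge\omega=\omega\wedge a$ for every $\omega\in\A$, so the symbol $a$ may be slid past any factor without a sign. Second, comparing degrees in $d\xi_i=a\wedge b_i$ gives $|\xi_i|=|a|+|b_i|-1$, so $|\xi_i|$ and $|b_i|$ have opposite parity; equivalently $(-1)^{|\xi_i|}=-(-1)^{|b_i|}$. I will also use $d\overline{\xi_i}=(-1)^{|\xi_i|}d\xi_i=(-1)^{|\xi_i|}\,a\wedge b_i$, that $db_i=0$, and the graded Leibniz rule in the form $d(\omega_1\wedge\cdots\wedge\omega_m)=\sum_k \overline{\omega_1}\wedge\cdots\wedge\overline{\omega_{k-1}}\wedge d\omega_k\wedge\omega_{k+1}\wedge\cdots\wedge\omega_m$, where $\overline{\omega}=(-1)^{|\omega|}\omega$.

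I would write $c=\sum_{i=1}^n c_i$ with $c_i=\overline{\xi_1}\wedge\cdots\wedge\overline{\xi_{i-1}}\wedge b_i\wedge\xi_{i+1}\wedge\cdots\wedge\xi_n$ and apply Leibniz to each $c_i$. Because $db_i=0$, the surviving terms of $dc_i$ come only from differentiating a factor $\overline{\xi_k}$ with $k<i$ or a factor $\xi_k$ with $k>i$. Substituting the formulas for $d\xi_k$ and $d\overline{\xi_k}$ and then pushing the resulting $a$ to the front (legitimate, by centrality) and converting the bars $\overline{\xi_k}$ into $\xi_k$ where convenient, each such term becomes $\pm\,a$ wedged with a fixed monomial in the $b$'s and $\xi$'s. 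Summing over $i$, one checks that every monomial occurring in $dc$ appears exactly twice: for a fixed pair $p<q$, once from $dc_p$ (differentiating the factor $\xi_q$) and once from $dc_q$ (differentiating the factor $\overline{\xi_p}$); after moving $a$ to the front both equal a scalar multiple of $a\wedge\overline{\xi_1}\wedge\cdots\wedge\overline{\xi_{p-1}}\wedge b_p\wedge\xi_{p+1}\wedge\cdots\wedge\xi_{q-1}\wedge b_q\wedge\xi_{q+1}\wedge\cdots\wedge\xi_n$.

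It then remains to compare the two signs. Tracking the Leibniz signs together with the $(-1)^{|\xi_k|}$ factors produced by the bars, one finds that the contribution from $dc_p$ carries the sign $(-1)^{\sum_{k<p}|\xi_k|+|b_p|+\sum_{p<k<q}|\xi_k|}$ while that from $dc_q$ carries $(-1)^{\sum_{k<p}|\xi_k|+|\xi_p|+\sum_{p<k<q}|\xi_k|}$; these differ precisely by the factor $(-1)^{|b_p|-|\xi_p|}=-1$, using the parity relation above. Hence the two contributions cancel for every $p<q$, and summing gives $dc=0$. The only delicate point is this sign bookkeeping; it is routine but must be carried out carefully, and it is exactly here that the hypothesis ``$|a|$ even'' is essential — both to make $a$ central and to produce the crucial sign $(-1)^{|b_p|-|\xi_p|}=-1$. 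As a consistency check one may also note the compact identity $a\wedge c=d(\xi_1\wedge\cdots\wedge\xi_n)$, obtained by sliding, in each summand of $a\wedge c$, the central factor $a$ into the $i$-th slot so that $a\wedge b_i$ becomes $d\xi_i$; this already shows $a\wedge dc=0$, though removing the factor $a$ still requires the computation above.
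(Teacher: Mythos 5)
Your proof is correct: the sign bookkeeping checks out (the two contributions to the coefficient of $a\wedge\overline{\xi_1}\wedge\cdots\wedge b_p\wedge\cdots\wedge b_q\wedge\cdots\wedge\xi_n$ indeed differ by $(-1)^{|b_p|-|\xi_p|}=-1$ since $|\xi_p|=|a|+|b_p|-1$ with $|a|$ even), and the identity $a\wedge c=d(\xi_1\wedge\cdots\wedge\xi_n)$ is a nice sanity check. The paper omits the proof entirely, treating it as a straightforward computation, and your direct Leibniz-rule verification is exactly the argument it implicitly relies on.
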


\begin{definition}\label{def:GMassey}
In the situation above, the {\it $n^{th}$ order $a$-Massey
product} of the $b_i$ (or just {\it $a$-product}) is the subset
 $$
\la a; b_1,\ldots,b_n \ra :=  \left\{ \left[\sum_i
\overline{\xi_1}\wedge\ldots \wedge\overline{\xi_{i-1}} \wedge b_i
\wedge \xi_{i+1} \wedge \ldots \wedge \xi_n\right] \, |\,
  d\xi_i = a\wedge b_i\right\}\subset  H(\A).
  $$
We say that the $a$-Massey product is {\it trivial} if $0\in \la
a; b_1,\ldots,b_n\ra$.
\end{definition}

If $n=2$, the product introduced above is just the triple Massey
product $\IP{b_1,a,b_2}$, but for higher values of $n$ these
products are different to the higher order Massey products. In the
applications we will use the $3^{rd}$ order $a$-product with $b_i$
even degree elements, so that the product can be written as
 \begin{equation}\label{eq:triple product}
 \IP{a;b_1,b_2,b_3}= \{[b_1\wedge  \xi_2 \wedge \xi_3 + c.p.]\, |\, d\xi_i = a \wedge b_i\},
 \end{equation}
where $c.p.$ stands for cyclic permutations. The product
\eqref{eq:triple product} appeared before in \cite{FM4} in the
same context we will use it later.

Now we study the indeterminacy of this product and  show that the
$a$-product is an obstruction to formality.

\begin{lemma}\label{lem:symmetry}
Let $\sigma$ be the permutation of $\{1, \ldots, n\}$ which is
just the transposition of $j$ and $j+1$, for some $j$. Then given
$a$, $b_i$ and $\xi_i$ as above, we have $c =
(-1)^{(|b_j|+1)(|b_{j+1}|+1)}c_{\sigma}$, where $c$ is given by
equation \eqref{eq:c} and
 $$
 c_{\sigma} = \sum_i \overline{\xi_{\sigma(1)}}\wedge\ldots
 \wedge \overline{\xi_{\sigma(i-1)}} \wedge b_{\sigma(i)} \wedge
 \xi_{\sigma(i+1)} \wedge \ldots \wedge \xi_{\sigma(n)}.
 $$
\end{lemma}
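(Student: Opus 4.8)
The plan is to expand the defining formula \eqref{eq:c} for $c$, and the analogous formula for $c_\sigma$, as sums over the index $i$, and then match the two sums term by term, exploiting that the transposition $\sigma$ moves only the two adjacent indices $j$ and $j+1$. The one preliminary I would record is a parity count: since $d\xi_i=a\wedge b_i$ and $|a|$ is even, we have $|\xi_i|\equiv|b_i|+1\pmod 2$, so that $\overline{\xi_i}=(-1)^{|b_i|+1}\xi_i$ and, in particular, interchanging the two adjacent factors $\xi_j,\xi_{j+1}$ — or the two adjacent factors $\overline{\xi_j},\overline{\xi_{j+1}}$ — inside any wedge product produces exactly the sign $\varepsilon:=(-1)^{(|b_j|+1)(|b_{j+1}|+1)}$ appearing in the statement.

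Next I would split the sum defining $c$ into the terms with $i<j$, the two terms with $i\in\{j,j+1\}$, and the terms with $i>j+1$, and do the same for $c_\sigma$. For $i<j$ one has $\sigma(k)=k$ for all $k\le i$, so the $i$-th term of $c_\sigma$ differs from the $i$-th term of $c$ only in that the adjacent unbarred factors $\xi_j,\xi_{j+1}$ (which sit to the right of $b_i$) appear in the opposite order; hence those two terms differ by $\varepsilon$. Symmetrically, for $i>j+1$ the relevant factors are the adjacent barred factors $\overline{\xi_j},\overline{\xi_{j+1}}$ sitting to the left of $b_i$, and again the $i$-th term of $c$ equals $\varepsilon$ times the $i$-th term of $c_\sigma$.

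The step that is not a routine transposition of like factors is the middle one, $i\in\{j,j+1\}$, and this is where I expect the real work to lie: here $b_j$ and $b_{j+1}$ exchange roles and the bars get redistributed, rather than two like factors simply being swapped. Writing $T_j,T_{j+1}$ for the $i=j,j+1$ summands of $c$ and $S_j,S_{j+1}$ for those of $c_\sigma$, all four share the common prefix $\overline{\xi_1}\wedge\cdots\wedge\overline{\xi_{j-1}}$ and common suffix $\xi_{j+2}\wedge\cdots\wedge\xi_n$, so the comparison reduces to the three middle slots. A short sign computation — move $b_{j+1}$ past $\xi_j$ and use $\overline{\xi_j}=(-1)^{|\xi_j|}\xi_j$ together with the parity count above — gives $T_{j+1}=(-1)^{|\xi_j|(|b_{j+1}|+1)}S_j=\varepsilon S_j$, and symmetrically $T_j=\varepsilon S_{j+1}$. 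Adding these two identities and combining with the term-by-term identities of the previous paragraph yields $c=\varepsilon\,c_\sigma$, which is the assertion. I would also remark that the boundary cases $j=1$ and $j+1=n$ need no separate treatment, since then the prefix or the suffix is empty but plays no role in any of the sign counts.
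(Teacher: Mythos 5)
Your proof is correct: the parity observation $|\xi_i|\equiv|b_i|+1\pmod 2$ (valid since $|a|$ is even), the splitting of the sum into $i<j$, $i\in\{j,j+1\}$, $i>j+1$, and the cross-matching $T_j=\varepsilon S_{j+1}$, $T_{j+1}=\varepsilon S_j$ in the middle case all check out. The paper omits the argument entirely, declaring it ``a straightforward computation,'' and what you have written is precisely that computation carried out.
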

The proof is a straightforward computation.

\begin{lemma}\label{lem:cohomology a b}
The $a$-Massey product $\la a;b_1,\ldots,b_n\ra$ only depends on the
cohomology classes $[a]$, $[b_i]$ and not on the particular
elements representing these classes.
\end{lemma}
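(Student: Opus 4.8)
The plan is to show invariance in two independent moves: first fix the cocycles $b_i$ and vary the representative of $[a]$, then fix $a$ and vary the representatives of the $[b_i]$. In each case the strategy is the standard Massey-product argument: when we change a representative, we can choose the primitives $\xi_i$ coherently and track the resulting change in the cocycle $c$ of equation \eqref{eq:c}, showing it differs from the old one by an exact form (so the cohomology class is unchanged as an element of $H(\A)$) — and in fact, since the definition already quantifies over \emph{all} choices of $\xi_i$, it suffices to show the two sets $\la a;b_1,\dots,b_n\ra$ computed with the two different representatives coincide.

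First I would handle replacing $a$ by $a' = a + d\eta$ for some $\eta$. If $d\xi_i = a\wedge b_i$, then since $b_i$ is closed, $d(\xi_i + (-1)^{|\eta|}\eta\wedge b_i) = a\wedge b_i + d\eta\wedge b_i = a'\wedge b_i$ (up to sign bookkeeping, using $|a|$ even so degrees match), so $\xi_i' := \xi_i \pm \eta\wedge b_i$ is an admissible primitive for $a'$. Substituting $\xi_i'$ into \eqref{eq:c} and expanding multilinearly, the ``leading'' term reproduces the original $c$, and every other term contains at least one factor $\eta\wedge b_j$; I would then argue that the sum of all these correction terms is exact. The cleanest way is to exhibit an explicit primitive: the correction terms should assemble, after using closedness of $b_i$ and $d\xi_i = a\wedge b_i$, into $d$ of an expression built from $\eta$, the $b_i$ and the $\xi_i$ — essentially the same telescoping identity that underlies Proposition \ref{prop:GMassey}. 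Conversely any primitive for $a'\wedge b_i$ differs from $\xi_i'$ by a cocycle, which just reparametrizes the quantification; hence the two sets agree.

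Next I would replace $b_1$ by $b_1' = b_1 + d\beta$ (the other indices are symmetric by Lemma \ref{lem:symmetry}). Now $a\wedge b_1' = a\wedge b_1 + a\wedge d\beta = a\wedge b_1 + \pm d(a\wedge\beta)$ since $a$ is closed, so $\xi_1' := \xi_1 \pm a\wedge\beta$ is an admissible primitive for $a\wedge b_1'$, while we keep $\xi_i$ for $i\ge 2$. Plugging $b_1', \xi_1'$ into \eqref{eq:c} and expanding, I expect the extra terms to split into two kinds: those coming from $b_1 \mapsto b_1 + d\beta$ in the factor-$b_1$ slot, and those coming from $\xi_1 \mapsto \xi_1 + a\wedge\beta$ wherever $\xi_1$ appears. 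Using $da = 0$, $db_i = 0$ and $d\xi_i = a\wedge b_i$ — and crucially that $|a|$ is even so it commutes past the $\xi$'s and $b$'s in the wedge without sign trouble beyond $\overline{(\,\cdot\,)}$ — these corrections should again organize into an exact form plus a reparametrization of the $\xi_i$'s, so that the set $\la a;b_1',b_2,\dots,b_n\ra$ equals $\la a;b_1,\dots,b_n\ra$.

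The main obstacle is bookkeeping: the Koszul signs in \eqref{eq:c} (the bars $\overline{\xi} = (-1)^{|\xi|}\xi$, and the varying positions at which a modified factor sits among wedge factors of mixed parity) make it easy to drop a sign, and one must be careful that the ``correction is exact'' claim genuinely produces a primitive rather than something that is only closed. I would contain this by first doing the case where all $b_i$ (hence all $\xi_i$) have a fixed parity — which, as the paper notes, is the only case used in applications (equation \eqref{eq:triple product}) — where the signs collapse dramatically, and then remark that the general signs follow the same pattern, citing Proposition \ref{prop:GMassey} and Lemma \ref{lem:symmetry} to avoid re-deriving the telescoping identity from scratch. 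A clean alternative that sidesteps much of the sign juggling is to observe that $c$ in \eqref{eq:c} is exactly the degree-$n$ part of $\prod_i(\xi_i - \text{(something)})$-type expression, or more precisely to note $c = \delta_a(\xi_1\wedge\cdots\wedge\xi_n)$-style formula for a suitable twisted differential, reducing both invariance claims to the chain-homotopy invariance of that differential; but unless such a formulation is already available I would stick with the direct expansion restricted to the even-degree case plus a remark.
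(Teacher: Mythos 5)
Your proposal follows essentially the same route as the paper's proof: treat the variations of $a$ and of the $b_i$ separately, use the corrected primitives $\xi_i' = \xi_i + \alpha\wedge b_i$ (respectively $\xi_1' = \xi_1 + a\wedge\alpha$), and track the resulting change in the representative $c$ of \eqref{eq:c}. The only point where the paper is simpler than your plan is the $a$-variation: there the correction terms cancel identically rather than merely being exact (the quadratic terms vanish since $\alpha$ is odd, so $\alpha^2=0$, and the linear terms cancel in pairs between the $j<i$ and $j>i$ sums), so no primitive needs to be exhibited, while in the $b$-variation the correction is exactly $d(\alpha\wedge\xi_2\wedge\cdots\wedge\xi_n)$, as you anticipated.
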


\begin{proof}
Let $ a + d \alpha$ be another representative for the class $[a]$.
Then, the generic element in $\IP{a+d\alpha;b_1, \ldots,b_n}$ is
given by the cohomology class of
 \begin{equation}\label{eq:a + dalpha}
 c' = \sum_i \overline{\xi_1'} \wedge \ldots
 \wedge \overline{\xi_{i-1}'}\wedge b_i\wedge \xi_{i+1}'\wedge \ldots \wedge \xi_n'.
 \end{equation}
with $\xi_i'$ satisfying $d\xi_i' = (a + d\alpha) \wedge b_i$.
%%Therefore, $\xi_i = \xi_i' - \alpha \wedge b_i$ satisfies $d\xi_i
Thus, $\xi_i = \xi_i' - \alpha \wedge b_i$ satisfies $d\xi_i
= a \wedge b_i$. Since $a$ is of even degree, $\alpha$ is of odd
degree and hence $\alpha^2 =0$. Therefore, letting $c$ be given by
equation \eqref{eq:c} we have
 \begin{align*}
 c' =&\,  \sum_i (\overline{\xi_1 +\alpha \wedge b_1}) \wedge \ldots
 \wedge b_i\wedge \ldots \wedge  (\xi_n +\alpha \wedge b_n) \\
 =&\,  c + \sum_{j<i} \overline{\xi_1}\wedge \ldots \wedge
 \overline{\xi_{j-1}}\wedge \overline{\alpha \wedge b_j}
 \wedge \overline{\xi_{j+1}}\wedge \ldots \wedge
 \overline{\xi_{i-1}}\wedge b_i\wedge \ldots \wedge \xi_n \\
 &+ \sum_{i<j} \overline{\xi_1}\wedge\ldots  \wedge
 \overline{\xi_{i-1}}\wedge b_i\wedge\xi_{i+1} \wedge
 \ldots \wedge \alpha \wedge b_j \wedge  \ldots \wedge \xi_n  \\
 = &\, c + \sum_{j<i} \overline{\xi_1}\wedge \ldots \wedge
 \overline{\xi_{j-1}}\wedge \overline{\alpha \wedge b_j} \wedge
 \overline{\xi_{j+1}}\wedge \ldots \wedge
 \overline{\xi_{i-1}}\wedge b_i\wedge \ldots \wedge \xi_n \\&
 -\sum_{i<j} \overline{\xi_1}\wedge\ldots \wedge \overline{\xi_{i-1}}
 \wedge \overline{\alpha\wedge b_i}\wedge \overline{\xi_{i+1}}\wedge
 \ldots \wedge \overline{\xi_{j-1}}  \wedge b_j \wedge  \ldots \wedge \xi_n \\
 = &\,  c,
\end{align*}
where in the second equality we have expanded the expression for
$c'$ and used $\alpha^2 =0$, in the third equality we used that
$\eta \wedge \alpha = - \overline{\alpha} \wedge \overline{\eta}$
for any form $\eta$, as $\alpha$ is odd, and in the last we used
that the two sums are the same, with the roles of $i$ and $j$
reversed. This shows that the $a$-Massey product only depends on $[a]$.

A similar computation shows that the same is true for the $b_i$.
We do it for $i=1$. If $b_1+d\alpha$ is another representative for
the class $[b_1]$, then the generic element in
$\IP{a;b_1+d\alpha,b_2, \ldots,b_n}$ is given by the cohomology
class (\ref{eq:a + dalpha}), where $\xi_i'$ satisfies $d\xi_1' = a
\wedge (b_1+d\alpha)$, and $d\xi_i'=a\wedge b_i$ for $i>1$. Take
$\xi_1=\xi_1'-a\wedge \alpha$ and $\xi_i'=\xi_i$ for $i>1$, so
that $d\xi_i = a \wedge b_i$. Letting $c$ be given by equation
\eqref{eq:c} we have
 \begin{align*}
 c' =&\, (b_1+d\alpha) \wedge \xi_2 \wedge \ldots \wedge  \xi_n +
 \sum_{i>1} (\overline{\xi_1 + a\wedge \alpha}) \wedge \overline{\xi_2} \wedge \ldots
 \wedge b_i\wedge \ldots \wedge  \xi_n \\
 =&\,  c + (d\alpha \wedge \xi_2 \wedge \ldots \wedge  \xi_n +
 \sum_{i>1} \overline{a\wedge \alpha} \wedge \overline{\xi_2} \wedge \ldots
 \wedge b_i\wedge \ldots \wedge  \xi_n) \\
 =&\,  c + d( \alpha \wedge \xi_2 \wedge \ldots \wedge  \xi_n )\,
 ,
\end{align*}
where we have used that $a$ is of even degree.
\end{proof}

One computation relevant to the $a$-Massey products consists in checking
what happens to them when one changes the $\xi_i$ by closed forms
$\eta_i$, as this gives the indeterminacy of this product.

\begin{lemma}\label{lem:helps in the computations}
Fix $j \in \{1,\ldots,n\}$ and let $\xi_j' =  \xi_j +\eta_j$ for
some closed element $\eta_j$. Let $c$ be given by equation
\eqref{eq:c} and $c'$ be given by the same equation but with
$\xi_j$ swapped by $\xi_j'$. Then
 \begin{equation}
 \begin{aligned}\label{eq:indeterminacy}
 c'&= c+ (-1)^{(|b_j|+1)(n-j+\sum_{i>j}|b_i|)}\left(\sum_{i=1, i\neq j}^n
 \overline{\xi_1}\wedge\ldots\wedge \widehat{ \overline{\xi_j}}\wedge \ldots
 \wedge \overline{\xi_{i-1}} \wedge b_i \wedge \xi_{i+1} \wedge \ldots \wedge
 \xi_n\right) \wedge \eta_j,
 \end{aligned}
 \end{equation}
where and $\widehat{ \xi_j}$ indicates that the term $\xi_j$ is
skipped in the product.
 \end{lemma}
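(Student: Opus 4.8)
The plan is to compute $c'$ directly by substituting $\xi_j' = \xi_j + \eta_j$ into the defining formula \eqref{eq:c}, expanding, and tracking signs. Since $\eta_j$ appears linearly in $c'$ (it occurs in each wedge product at position $j$ at most once), we get $c' = c + (\text{correction terms involving } \eta_j)$. The summand of \eqref{eq:c} indexed by $i$ contains $\xi_j$ precisely when $i \neq j$: for $i < j$ it appears as $\overline{\xi_j}$ (a ``barred'' factor), while for $i > j$ it appears as $\xi_j$ (unbarred). Replacing $\xi_j$ by $\xi_j + \eta_j$ in the $i$-th summand produces one extra term in which $\eta_j$ (respectively $\overline{\eta_j}$) sits in the $j$-th slot; the $i = j$ summand is unchanged since it has no $\xi_j$ factor.

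The main computational task is then to move the $\eta_j$-factor out to the right end of each wedge product so that the correction can be written as a single wedge with $\eta_j$, as in \eqref{eq:indeterminacy}. First I would handle the case $i > j$: here the term is $\overline{\xi_1}\wedge\cdots\wedge\overline{\xi_{i-1}}\wedge b_i\wedge\cdots\wedge\xi_n$ with $\xi_j$ replaced by $\eta_j$, and I would commute $\eta_j$ past the factors $\xi_{j+1},\ldots,\xi_{i-1},b_i,\xi_{i+1},\ldots,\xi_n$ to the right, picking up the sign $(-1)^{|\eta_j|\cdot(\text{sum of degrees passed})}$. Here $|\eta_j| = |\xi_j| = |a| + |b_j| - 1 \equiv |b_j| + 1 \pmod 2$ since $|a|$ is even, which explains the factor $(-1)^{|b_j|+1}$ in the exponent, and the degrees passed contribute the $n - j + \sum_{i>j}|b_i|$ part (one $b_i$ of degree $|b_i|$ plus the remaining $\xi$'s, each contributing $|b_\ell|+1$, though modulo $2$ only the count $n-j$ of such factors and the $b_i$ degrees survive — I would check the bookkeeping carefully here). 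For $i < j$, the replaced factor is $\overline{\eta_j}$ sitting among barred terms; after commuting it to the right past $\overline{\xi_{j+1}},\ldots,\overline{\xi_{i-1}}$ — wait, one must be careful that for $i<j$ the product only runs up to $\overline{\xi_{i-1}}$, so there is no $\xi_j$ there at all. Let me re-examine: $\xi_j$ appears in the $i$-th summand iff $j \le i-1$ (i.e. $j<i$, as a barred factor) or $j \ge i+1$ (i.e. $j>i$, as an unbarred factor) — so indeed every $i \neq j$ contributes, consistent with the sum over $i \neq j$ in \eqref{eq:indeterminacy}.

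The key step — and the main obstacle — is the sign bookkeeping: one must verify that after moving $\eta_j$ to the far right in each of the $n-1$ correction summands, the accumulated sign is the \emph{same} for all $i \neq j$, so that the corrections factor as $(\pm)\bigl(\sum_{i \neq j}\overline{\xi_1}\wedge\cdots\wedge\widehat{\overline{\xi_j}}\wedge\cdots\wedge\xi_n\bigr)\wedge\eta_j$ with a single global sign $(-1)^{(|b_j|+1)(n-j+\sum_{i>j}|b_i|)}$. The point is that whether $\eta_j$ starts barred ($i<j$) or unbarred ($i>j$), and regardless of how many factors lie to its right, the total parity of ``$|\eta_j|$ times (degrees to the right of slot $j$)'' works out to the stated exponent modulo $2$; the barred vs.\ unbarred sign on $\eta_j$ itself and the sign from reversing bar conventions conspire to cancel the $i$-dependence. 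I would verify this by a careful parity count, using $|\xi_\ell| \equiv |b_\ell| + 1$ and $|b_i \wedge \xi_{i+1} \wedge \cdots \wedge \xi_n|$ having the right parity, treating the $i<j$ and $i>j$ cases separately and confirming they match. Once the sign is pinned down and shown to be independent of $i$, equation \eqref{eq:indeterminacy} follows immediately by factoring $\eta_j$ out of the sum. This is exactly the kind of ``straightforward but delicate'' sign computation that the authors elsewhere dispatch with ``the proof is a straightforward computation,'' so I expect the write-up to be short modulo this parity check.
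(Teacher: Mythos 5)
Your route --- expand $\xi_j'=\xi_j+\eta_j$ in place and then commute the resulting $\eta_j$ (or $\overline{\eta_j}$) factor to the right end of each summand --- is viable and does lead to the stated formula, but it is not the paper's route and, as written, it stops short of being a proof. The paper first invokes Lemma~\ref{lem:symmetry}: composing the adjacent transpositions that move the index $j$ to the last slot gives $c'=(-1)^{(|b_j|+1)\sum_{i>j}(|b_i|+1)}c'_\sigma=(-1)^{(|b_j|+1)(n-j+\sum_{i>j}|b_i|)}c'_\sigma$, and in $c'_\sigma$ the factor $\xi_j+\eta_j$ sits unbarred at the extreme right of every summand in which it occurs, so the expansion is linear and immediate, the global sign appears exactly once, and there is no per-summand bookkeeping and no $i$-independence to check. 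Your approach inlines precisely the computation that Lemma~\ref{lem:symmetry} packages, which is why you are forced to verify separately that the accumulated Koszul sign is the same for $i<j$ (where $\eta_j$ enters unbarred) and for $i>j$ (where it enters as $\overline{\eta_j}$).

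That verification is the entire content of the lemma, and you explicitly defer it (``I would check the bookkeeping carefully here'', ``I would verify this by a careful parity count''). For the record, the claim you would need is true: for $i<j$ the factors to the right of slot $j$ have total degree $\equiv n-j+\sum_{l>j}|b_l| \pmod 2$, while for $i>j$ they have total degree $\equiv n-j-1+\sum_{l>j}|b_l|$, because $b_i$ replaces $\xi_i$ and $|\xi_i|=|b_i|+1$; that missing unit of degree is exactly compensated by the sign $(-1)^{|\eta_j|}$ from unbarring $\overline{\eta_j}$, since $|\eta_j|\equiv |b_j|+1$. So the cancellation mechanism you gesture at does work, but a finished proof must contain this count rather than a promise of it. Finally, watch the slip at the start: in the $i$-th summand $\xi_j$ appears \emph{barred} when $i>j$ and unbarred when $i<j$, the opposite of your opening statement; you partially correct the membership question later, but your ``first I would handle the case $i>j$'' paragraph still treats $\eta_j$ as unbarred there, which is the wrong case.
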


\begin{proof}
Using Lemma \ref{lem:symmetry} and Proposition \ref{prop:GMassey},
we have
\begin{align*}
c' &  = (-1)^{(|b_j|+1)(n-j+\sum_{i>j}|b_i|)}\sum_{i=1}^n
\overline{\xi_1}\wedge\cdots\wedge \widehat{
\overline{\xi_j}}\wedge \cdots \wedge \overline{\xi_{i-1}} \wedge
b_i \wedge \xi_{i+1} \wedge \cdots \wedge \xi_n \wedge
(\xi_j+\eta_j)\\
& =(-1)^{(|b_j|+1)(n-j+\sum_{i>j}|b_i|)}\sum_{i=1}^n
\overline{\xi_1}\wedge\cdots\wedge \widehat{
\overline{\xi_j}}\wedge \cdots \wedge \overline{\xi_{i-1}} \wedge
b_i \wedge \xi_{i+1} \wedge \cdots \wedge \xi_n \wedge \xi_j\\
&+ (-1)^{(|b_j|+1)(n-j+\sum_{i>j}|b_i|)}\sum_{i=1, i \neq j}^n
\overline{\xi_1}\wedge\cdots\wedge \widehat{
\overline{\xi_j}}\wedge \cdots \wedge \overline{\xi_{i-1}} \wedge
b_i \wedge \xi_{i+1} \wedge \cdots \wedge \xi_n \wedge \eta_j\\ &
= c +  (-1)^{(|b_j|+1)(n-j+\sum_{i>j}|b_i|)}\left(\sum_{i=1,i \neq
j}^n \overline{\xi_1}\wedge\cdots\wedge \widehat{
\overline{\xi_j}}\wedge \cdots \wedge \overline{\xi_{i-1}} \wedge
b_i \wedge \xi_{i+1} \wedge \cdots \wedge \xi_n \right) \wedge
\eta_j.
\end{align*}
\end{proof}

Observe that up to a sign, the coefficient of $\eta_j$ in
\eqref{eq:indeterminacy} is an element in
$\IP{a;b_1,\ldots,\widehat{b_j},\ldots,b_n}$, hence Lemma
\ref{lem:helps in the computations} proves the following inductive
way to compute the indeterminacy of the $a$-product.

\begin{proposition}\label{prop:indet}
The indeterminacy of the $a$-product $\IP{a;b_1,\ldots,b_n}$  is a
subset of
 $$
 \sum_{j=1}^n \IP{a;b_1,\ldots,\widehat{b_j},\ldots,b_n}\wedge H(\A).
 $$
In particular, the indeterminacy of the triple $a$-product $\IP{a;b_1,b_2,b_3}$ is a subset of
 $$
 \IP{b_1,a,b_2} \wedge H(\A)+ \IP{b_2,a,b_3}\wedge H(\A)+ \IP{b_3,a,b_1}\wedge H(\A),
 $$
where $\IP{\bullet,\bullet,\bullet}$ is the
%%triple Massey product.
(ordinary) triple Massey product.
\end{proposition}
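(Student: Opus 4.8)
The plan is to deduce Proposition~\ref{prop:indet} as a direct corollary of Lemma~\ref{lem:helps in the computations}. The indeterminacy of $\IP{a;b_1,\ldots,b_n}$ is by definition the set of differences $c-c'$, where $c$ and $c'$ arise from two admissible choices $\xi_i$ and $\xi_i'$ with $d\xi_i=d\xi_i'=a\wedge b_i$. First I would observe that any two such families differ by closed forms: writing $\eta_i := \xi_i' - \xi_i$, each $\eta_i$ is closed since $d\eta_i = a\wedge b_i - a\wedge b_i = 0$. So it suffices to track how $c$ changes when we pass from $(\xi_1,\ldots,\xi_n)$ to $(\xi_1+\eta_1,\ldots,\xi_n+\eta_n)$ with all $\eta_i$ closed.

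Next I would perform this change one index at a time: replace $\xi_1$ by $\xi_1+\eta_1$, then $\xi_2$ by $\xi_2+\eta_2$, and so on, applying Lemma~\ref{lem:helps in the computations} at each step. At the $j$-th step the correction term is, up to sign, the wedge of $\eta_j$ with the expression obtained from $c$ by deleting $\xi_j$ and $b_j$ — and by Lemma~\ref{lem:symmetry} (or directly by comparing with Definition~\ref{def:GMassey}) that expression is, again up to sign, a representative of an element of $\IP{a;b_1,\ldots,\widehat{b_j},\ldots,b_n}$. Hence each step contributes something lying in $\IP{a;b_1,\ldots,\widehat{b_j},\ldots,b_n}\wedge H(\A)$, and summing over $j=1,\ldots,n$ shows that $c'-c \in \sum_{j=1}^n \IP{a;b_1,\ldots,\widehat{b_j},\ldots,b_n}\wedge H(\A)$, which is the first claim. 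One should be a little careful that after changing earlier $\xi_i$'s the ``deleted'' expression at step $j$ is a representative of the same $a$-Massey product set (it is, because that set is independent of the chosen primitives, and in any case the coefficient only needs to land in the stated sum of subspaces, not equal a fixed element).

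For the second claim, specialize to $n=3$. The sets $\IP{a;b_j,b_k}$ with two entries are, by the remark following Definition~\ref{def:GMassey} (the $n=2$ case), just ordinary triple Massey products: $\IP{a;b_1,b_2} = \IP{b_1,a,b_2}$, and cyclically. Substituting these into the general formula gives the indeterminacy of $\IP{a;b_1,b_2,b_3}$ as a subset of $\IP{b_1,a,b_2}\wedge H(\A) + \IP{b_2,a,b_3}\wedge H(\A) + \IP{b_3,a,b_1}\wedge H(\A)$.

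The only genuine subtlety — and the main obstacle — is bookkeeping: making sure that changing $\xi_i$ for $i<j$ before reaching step $j$ does not break the identification of the step-$j$ correction with an element of $\IP{a;b_1,\ldots,\widehat{b_j},\ldots,b_n}$. This is handled by noting that Lemma~\ref{lem:helps in the computations} is stated for an arbitrary admissible choice of the remaining $\xi_i$, and that the conclusion of Proposition~\ref{prop:indet} is an inclusion into a sum of \emph{subspaces} of $H(\A)$ (each $\IP{a;\ldots}$ being a coset of such a subspace, wedged with all of $H(\A)$), so only membership in that sum needs to be checked, not a precise value. The sign factors appearing in \eqref{eq:indeterminacy} are irrelevant for the same reason. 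Everything else is the routine telescoping of the $n$ single-index modifications.
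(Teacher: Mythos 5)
Your argument is correct and is essentially the paper's own: the authors likewise obtain Proposition~\ref{prop:indet} directly from Lemma~\ref{lem:helps in the computations}, observing that the coefficient of $\eta_j$ in \eqref{eq:indeterminacy} is (up to sign) a representative of $\IP{a;b_1,\ldots,\widehat{b_j},\ldots,b_n}$ and changing the primitives one index at a time, with the $n=2$ identification $\IP{a;b_i,b_j}=\IP{b_i,a,b_j}$ giving the triple-product case. Your extra care about the step-$j$ bookkeeping and about only needing membership in a sum of sets is sound and matches the intended reading.
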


\begin{remark}\label{rem:eta exact}
As a corollary to Lemma \ref{lem:helps in the computations} we see
that if we change $\xi_i$ by an exact form, the cohomology class
of the representative of the product does not change. Together
with Lemma \ref{lem:cohomology a b}, this tells us that in order
to compute the $a$-Massey product, one does not have to worry
about the particular forms $a$ and $b_i$ chosen to represent their
cohomology classes. Further, once we fix one choice of $\xi_i$, in
order to obtain any other element in the set $ \la a;b_1,\ldots
,b_n\ra$ we only have to pick one representative for each
cohomology class of degree $|\xi_i|$ and add that to $\xi_i$.
\end{remark}

Next we show that the $a$-Massey products are well behaved under
quasi-isomorphisms.

\begin{lemma}\label{lem:quasi isomorphisms}
Let  $\psi:\mc{B} \into  \A$ be a quasi isomorphism. If $\A$ has
an $a$-Massey product, say $\IP{a;b_1,\ldots, b_n}$, and $a', b_i' \in
\mc{B}$ are such that $[\psi(a')] = [a]$ and $[\psi(b_i')] =
[b_i]$ then, the $a'$-Massey product $\IP{a';b_1',\ldots,b_n'}$ is
defined and satifies
 \begin{equation}\label{eq:quasi iso}
 \psi(\IP{a';b_1',\ldots , b_n'}) = \la
 a;b_1,\ldots,b_n\ra.
 \end{equation}
Conversely, if $\IP{a';b_1',\ldots,  b_n'} $ is an $a'$-Massey product on
$\mc{B}$ and $a= \psi(a')$ and $b_i = \psi(b_i')$ then the
identity \eqref{eq:quasi iso} also holds.
\end{lemma}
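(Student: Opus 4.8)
The plan is to read off everything from the fact that $\psi$ is a morphism of DGAs (so it commutes with $d$ and the wedge product, and preserves degrees, hence also the bar operation $\overline{\,\cdot\,}$), combined with the two key facts already established: Lemma~\ref{lem:cohomology a b} (the $a$-product depends only on the cohomology classes of $a$ and the $b_i$) and Remark~\ref{rem:eta exact} (changing any $\xi_i$ by an exact form does not change the cohomology class of the representative). First I would check that $\langle a';b_1',\ldots,b_n'\rangle$ is defined: $a'$ and the $b_i'$ are closed, $|a'|=|a|$ is even, and $\psi(a'\wedge b_i')=\psi(a')\wedge\psi(b_i')$ is cohomologous to $a\wedge b_i$, which is exact; since $\psi^*$ is injective on cohomology and $a'\wedge b_i'$ is closed, $a'\wedge b_i'$ is itself exact in $\mc{B}$, so Proposition~\ref{prop:GMassey} applies. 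The same remark shows that in the converse situation (where $a=\psi(a')$ and $b_i=\psi(b_i')$) the element $a\wedge b_i=\psi(a'\wedge b_i')$ is exact in $\A$, so $\langle a;b_1,\ldots,b_n\rangle$ is defined; once that is known the converse is literally the first statement applied with the identifications $[\psi(a')]=[a]$, $[\psi(b_i')]=[b_i]$. So it suffices to prove \eqref{eq:quasi iso} under the hypotheses of the first statement.

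For the inclusion $\psi(\langle a';b_1',\ldots,b_n'\rangle)\subseteq\langle a;b_1,\ldots,b_n\rangle$: pick $\xi_i'\in\mc{B}$ with $d\xi_i'=a'\wedge b_i'$, and let $c$ be the corresponding element of the form \eqref{eq:c}. Applying $\psi$, the forms $\psi(\xi_i')$ satisfy $d\psi(\xi_i')=\psi(a')\wedge\psi(b_i')$, and $\psi(c)=\sum_i\overline{\psi(\xi_1')}\wedge\cdots\wedge\psi(b_i')\wedge\cdots\wedge\psi(\xi_n')$ is a representative of $\langle \psi(a');\psi(b_1'),\ldots,\psi(b_n')\rangle$. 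Since $[\psi(a')]=[a]$ and $[\psi(b_i')]=[b_i]$, Lemma~\ref{lem:cohomology a b} identifies this set with $\langle a;b_1,\ldots,b_n\rangle$, so $[\psi(c)]$ lies in the latter.

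The reverse inclusion is the only step that genuinely uses that $\psi$ is a quasi-isomorphism, and it is the main point. By Lemma~\ref{lem:cohomology a b}, an arbitrary element of $\langle a;b_1,\ldots,b_n\rangle=\langle\psi(a');\psi(b_1'),\ldots,\psi(b_n')\rangle$ is represented by some $c=\sum_i\overline{\xi_1}\wedge\cdots\wedge\psi(b_i')\wedge\cdots\wedge\xi_n$ with $\xi_i\in\A$ and $d\xi_i=\psi(a'\wedge b_i')$. Choosing any $\zeta_i'\in\mc{B}$ with $d\zeta_i'=a'\wedge b_i'$, the difference $\xi_i-\psi(\zeta_i')$ is closed in $\A$, so by surjectivity of $\psi^*$ there is a closed $\eta_i'\in\mc{B}$ with $\xi_i-\psi(\zeta_i')$ cohomologous to $\psi(\eta_i')$; putting $\xi_i':=\zeta_i'+\eta_i'\in\mc{B}$ gives $d\xi_i'=a'\wedge b_i'$ and $\psi(\xi_i')=\xi_i+(\text{exact})$. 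Replacing $\xi_i$ by $\psi(\xi_i')$ one slot at a time and invoking Remark~\ref{rem:eta exact} each time, we see that $c$ is cohomologous to $\psi\bigl(\sum_i\overline{\xi_1'}\wedge\cdots\wedge b_i'\wedge\cdots\wedge\xi_n'\bigr)$, and the argument of $\psi$ is a representative of $\langle a';b_1',\ldots,b_n'\rangle$. Hence the class of $c$ lies in $\psi(\langle a';b_1',\ldots,b_n'\rangle)$, proving equality. The only real obstacle is bookkeeping: correctly peeling off the exact correction in each of the $n$ slots via Remark~\ref{rem:eta exact}, and keeping straight which set of identifications ($[\psi(a')]=[a]$ versus $a=\psi(a')$) is in force in the two halves of the statement.
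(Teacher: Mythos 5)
Your proposal is correct and follows essentially the same route as the paper's own proof: the forward inclusion is the direct image computation via Lemma~\ref{lem:cohomology a b}, and the reverse inclusion uses surjectivity of $\psi$ on cohomology to adjust a chosen primitive in $\mc{B}$ by a closed class so that its image agrees with the given $\xi_i$ up to an exact term, then invokes Lemma~\ref{lem:helps in the computations} and Remark~\ref{rem:eta exact}. Your observation that the converse is a literal instance of the first statement (after checking $a\wedge b_i=\psi(d\xi_i')$ is exact) is a slight sharpening of the paper's ``the second is analogous,'' but not a different argument.
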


\begin{proof}
We will only prove the first claim as the second is analogous.
First, since $\psi$ is a quasi-isomorphism, there are $a'$ and
$b_i' \in \mc{B}$ such that $[a] = [\psi(a')]$ and $[b_i]=
[\psi(b_i')]$.  Further, again because $\psi$ is a
quasi-isomorphism $a' \wedge b_i'$ is exact on $\mc{B}$, say
$a'\wedge b_i' = d\xi_i'$, so the $a'$-product
$\IP{a';b_1',\ldots,b_n'}$ is defined in $\mc{B}$.

Now we prove \eqref{eq:quasi iso}. We start showing that
$$\psi(\IP{a';b_1',\ldots , b_n'}) \supset \la a;b_1,\ldots,b_n\ra.$$
Let $[c]\in\la a;b_1,\ldots, b_n\ra$ be an element in the
$a$-product in $\mc{\A}$. According to Lemma \ref{lem:cohomology a
b},
  $$
  [c]\in\la \psi(a');\psi(b_1'),\ldots, \psi(b_n')\ra= \la a;b_1,\ldots, b_n\ra.
  $$
Therefore we may write $[c]=  [\sum \overline{\xi_1}\wedge
\ldots\wedge \psi(b_i')\wedge\ldots  \wedge \xi_n]$ with
$d\xi_i=\psi(a')\wedge\psi(b_i')$, hence $d(\xi_i-\psi(\xi_i'))=0$
and  $\xi_i-\psi(\xi_i')$ are closed elements in $\A$ which
represents cohomology classes in $H(\mc{A}) \cong H(\mc{B})$.
Hence there are elements $\zeta_i\in \mc{B}$  and  $z_i\in \A$
such that $\zeta_i$ are closed and $\psi(\xi_i' + \zeta_i) = \xi_i
+ dz_i$ . So according to Lemma \ref{lem:helps in the
computations} and Remark \ref{rem:eta exact}, the class
  $$
  [c'] = [\sum  (\overline{\xi_1'+\zeta_1}) \wedge \ldots \wedge b_i'\wedge
  \ldots \wedge (\xi_n' + \zeta_n)] \in \IP{a';b_1'\ldots,b_n'}
  $$
  satisfies
 \begin{align*}
 \psi([c'])&= [\sum \overline{\psi(\xi_1'+\zeta_1)} \wedge
 \ldots \wedge \psi(b_i') \wedge \ldots \wedge \psi(\xi_n' + \zeta_n)]  \\
 & =[\sum (\overline{\xi_1+d z_1}) \wedge \ldots \wedge \psi(b_i')\wedge
  \ldots \wedge (\xi_n + d z_n)]  \\
 &=[\sum \overline{\xi_1}\wedge \ldots \wedge \psi(b_i')\wedge \ldots  \wedge \xi_n]=[c],
 \end{align*}
which proves the inclusion.

To prove the other inclusion, let $[c']\in \la
a';{b}_1',\ldots,{b}_n'\ra$, and write $[c']=  [\sum
\overline{\xi_1'}\wedge \ldots \wedge b_i'\wedge \ldots  \wedge
\xi_n']$ with $d\xi_i'=a'\wedge b_i'$. Applying $\psi$ to this
expression we see that
 \begin{align*}
 \psi([c']) &=  [\sum \psi(\overline{\xi_1'})\wedge \ldots \wedge
 \psi(b_i')\wedge \ldots  \wedge \psi(\xi_n')] \in
 \IP{\psi(a');\psi(b_1'),\ldots,\psi(b_n')} = \IP{a;b_1,\ldots,b_n},
 \end{align*}
as we wanted.
\end{proof}

The obvious implication of this lemma is that $a$-Massey products
are obstructions to formality.

\begin{theorem}\label{prop:non-formal}\label{theo:amassey and formality}
If a DGA  has a nontrivial $a$-Massey product, then it is not
formal
\end{theorem}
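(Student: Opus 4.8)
The plan is to reduce the statement to the case of a formal DGA with vanishing $a$-Massey products, and then argue that on the cohomology algebra $H(\A)$, equipped with zero differential, every $a$-Massey product is automatically trivial. Combined with Lemma~\ref{lem:quasi isomorphisms}, this forces triviality on any formal DGA, which is the contrapositive of the theorem.

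First I would suppose that $\A$ is formal, so there is a minimal model $\M$ together with quasi-isomorphisms $\rho \colon \M \to \A$ and $\psi \colon \M \to H(\A)$ inducing the identity on cohomology (after identifying $H(\M) = H(\A)$). Since $H(\A)$, regarded as a DGA with $d = 0$, is then quasi-isomorphic to $\A$ via this zig-zag, it suffices by Lemma~\ref{lem:quasi isomorphisms} to show that \emph{every} $a$-Massey product in $\bigl(H(\A),0\bigr)$ is trivial; then the same holds for $\A$, contradicting the hypothesis. Here one must be slightly careful, since Lemma~\ref{lem:quasi isomorphisms} is stated for a single quasi-isomorphism $\psi \colon \mc B \inc \A$; I would apply it twice, once to $\rho$ and once to $\psi$, using that $a$-Massey products are preserved in both directions along a quasi-isomorphism. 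So the content of the proof is entirely the following claim: in a DGA with zero differential, all $a$-Massey products vanish.

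To prove that claim, work in $(H(\A),0)$ and let $a, b_1,\dots,b_n$ be (closed, everything is closed) elements with $|a|$ even and $a \wedge b_i$ exact. But the differential is zero, so $a \wedge b_i$ exact means $a \wedge b_i = 0$ for every $i$. Now one is free to choose the primitives $\xi_i$ with $d\xi_i = a \wedge b_i = 0$; the simplest legitimate choice is $\xi_i = 0$ for all $i$. Substituting into formula~\eqref{eq:c}, the representative
 \[
 c = \sum_i \overline{\xi_1}\wedge\ldots \wedge \overline{\xi_{i-1}}
 \wedge b_i \wedge \xi_{i+1} \wedge \ldots \wedge \xi_n
 \]
has every summand containing at least one factor $\xi_j = 0$ as soon as $n \geq 2$ (which holds since the product is defined only for $n \geq 2$), hence $c = 0$. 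Therefore $0 \in \la a; b_1,\dots,b_n\ra$, i.e.\ the $a$-Massey product is trivial.

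I expect the only delicate point to be the bookkeeping in transporting the product across the zig-zag $\A \leftarrow \M \to H(\A)$: one needs the "conversely" half of Lemma~\ref{lem:quasi isomorphisms} to push from $\M$ up to $\A$, and the forward half to compare $\M$ with $H(\A)$, and in each case one must match up the chosen representatives $a', b_i'$ so that their images have the prescribed cohomology classes — which is exactly what formality (the existence of $\psi$ inducing an isomorphism) guarantees. Everything else is the one-line observation that with $d=0$ one may take all primitives to vanish. No genuine obstacle arises; the theorem is essentially a formal consequence of the quasi-isomorphism invariance already established.
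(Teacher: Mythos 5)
Your proposal is correct and follows essentially the same route as the paper: transport the nontrivial $a$-Massey product through the minimal model to $H(\A)$ via Lemma~\ref{lem:quasi isomorphisms}, and observe that in a DGA with zero differential every $a$-Massey product is trivial (since $a\wedge b_i$ exact forces $a\wedge b_i=0$, so one may take $\xi_i=0$). The paper states the latter fact without proof; your explicit verification of it is the only difference.
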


\begin{proof}
Indeed, if $\A$ has a nontrivial $a$-Massey product then, according to
Lemma \ref{lem:quasi isomorphisms}, so does its minimal model. On
the other hand, $H(\A)$ never has a nontrivial product, so the
minimal models for $\A$ and $H(\A)$ can not be the same.
\end{proof}

Actually, the $a$-Massey product of degree 2 elements is the
first obstruction to formality that appears as an obstruction to
$3$--formality \cite{FM1} for a simply connected manifold, and
which is different from a Massey product.

%%%%%%%%%%%%%%%%%%%%%%%%%%%%%%%%%%%%%%%%%%%%%%%%%%%%
\subsection{Formality of manifolds and orbifolds}
%%%%%%%%%%%%%%%%%%%%%%%%%%%%%%%%%%%%%%%%%%%%%%%%%%%%

The {\it minimal model\/} $\M$ of a connected differentiable
manifold $M$ is the minimal model for the de Rham complex
$(\Omega(M),d)$ of differential forms on $M$. If $M$ is simply
connected, then the dual of the real homotopy vector space
$\pi_i(M)\otimes \RR$ is isomorphic to the space of generators of
$\M$ in degree $i$ for any $i$. This relation also happens when
$i>1$ and $M$ is nilpotent, that is, the fundamental group
$\pi_1(M)$ is nilpotent and its action on $\pi_j(M)$ is nilpotent
for $j>1$ (see~\cite{DGMS}).

A manifold $M$ is {\it formal} if  $(\Omega(M),d)$ is formal.
Therefore, if $M$ is formal and simply connected, then the real
homotopy groups $\pi_i(M)\otimes \RR$ are obtained from the
minimal model of $H(M)$.

Many examples of formal manifolds are known: all compact symmetric
spaces (e.g., spheres, projective spaces, compact Lie groups, flag
manifolds),  compact K\"ahler manifolds and simply connected
manifolds of dimension six or less. The importance of formality in
symplectic geometry stems from the fact that it allows to
distinguish between symplectic manifolds which admit K\"ahler
structures and some which do not \cite{DGMS,TO}.

Now we extend the definition of formality to orbifolds. Let us
first introduce this concept.

\begin{definition} \label{def:orbifold}
An {\it orbifold} is a (Hausdorff, paracompact) topological space
$M$ with an atlas with charts modelled on $U/G_p$, where $U$ is an
open set of $\RR^n$ and $G_p$ is a finite group acting linearly on
$U$ with only one fixed point $p\in U$. The number $n$ is the {\it
dimension} of the orbifold.
\end{definition}

Note that our definition of orbifold is more restricted to some
other definitions in the literature (e.g.\ \cite{Th2}).

An orbifold $M$ contains a discrete set $\Delta$ of points $p\in
M$ for which $G_p\neq Id$. The complement $M \setminus \Delta$ has
the structure of a smooth manifold. The points of $\Delta$ are
called singular points of $M$. For any singular point $p\in
\Delta$, let $B/G_p$ be a small neighbourhood of $p$, where $B$ is
a ball in $\RR^n$. Then $B/G_p$ is a rational homology ball
(actually it is contractible), and $\bd B/G_p$ is a rational
homology $(n-1)$-sphere.

%%The space of orbifold differential forms $\Omega^k_{orb}(M)$
The space $\Omega^k_{orb}(M)$ of {\em orbifold differential forms}
consists of $k$-forms such that in each chart $U/G_p$ are
$G_p$-invariant elements of $\Omega^k(U)$. Note that there is a
well-defined differential $d$, so that $(\Omega_{orb}(M),d)$ is a
DGA. Its cohomology is called the \textit{orbifold de Rham
cohomology of $M$}.

\begin{definition} \label{def:orbifold-formality}
 Let $M$ be an orbifold. Then a {\it minimal model} for $M$ is a minimal
 model for the DGA $(\Omega_{orb}(M),d)$. The orbifold $M$ is
 {\it formal} if its minimal model is formal.
\end{definition}

\begin{proposition} \label{prop:model-orbifold}
  Let $(\M,d)$ be the minimal model of an orbifold $M$. Then
  $H(\M)=H^*(M)$, where the latter means singular cohomology
  with real coefficients.
\end{proposition}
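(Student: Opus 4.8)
The plan is to show that the orbifold de Rham cohomology $H(\Omega_{orb}(M),d)$ computes the real singular cohomology $H^*(M;\RR)$, since then the claim follows from the fact that the minimal model $\M$ satisfies $H(\M)\cong H(\Omega_{orb}(M),d)$ by definition. The natural tool is a sheaf-theoretic / \v{C}ech argument: the key point is that $\Omega_{orb}^\bullet$ is a resolution of the constant sheaf $\underline{\RR}$ on the topological space $M$, after which a partition-of-unity argument gives a de Rham-type isomorphism.

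Here is how I would carry it out. First, define the presheaf $\mc{F}^k$ on $M$ by $\mc{F}^k(V)=\Omega^k_{orb}(V)$ for open $V\subseteq M$ (restricting the orbifold structure to $V$); the restriction maps are the obvious ones, and one checks this is a sheaf. Second, I would prove a local Poincar\'e lemma for orbifold forms: for a basic chart $V=B/G_p$ with $B$ a $G_p$-invariant ball, $H^k(\Omega_{orb}(V),d)=\RR$ for $k=0$ and $0$ for $k>0$. This is exactly the statement that taking $G_p$-invariants is exact on the (contractible) $B$: the ordinary Poincar\'e homotopy operator on $B$ can be averaged over the finite group $G_p$ to produce a $G_p$-equivariant contracting homotopy, so $H^k(\Omega_B^\bullet)^{G_p}=(H^k\Omega_B^\bullet)^{G_p}$, which is $\RR$ in degree $0$ and $0$ otherwise. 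Hence the augmented complex $0\to\underline{\RR}\to\mc{F}^0\to\mc{F}^1\to\cdots$ is an exact sequence of sheaves, i.e.\ $\mc{F}^\bullet$ is a resolution of $\underline{\RR}$.

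Third, I would show each $\mc{F}^k$ is acyclic for the global-sections functor (e.g.\ that it is fine, or at least soft), so that $H^k(M;\RR)=H^k(M;\underline{\RR})$ is computed by the complex of global sections $\Gamma(M,\mc{F}^\bullet)=(\Omega_{orb}(M),d)$. For this I need partitions of unity subordinate to an open cover by orbifold charts consisting of orbifold functions; these exist because one can average an ordinary smooth partition of unity on each chart $U$ over the finite group $G_p$ to get $G_p$-invariant bump functions, which descend to $\Omega^0_{orb}$. Multiplication by such functions gives the sheaf $\mc{F}^k$ the structure of a fine (hence soft, hence $\Gamma$-acyclic) sheaf. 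Finally, combining the resolution with acyclicity via the standard abstract de Rham theorem yields $H(\Omega_{orb}(M),d)\cong H^*(M;\RR)$, and since $\rho\colon\M\to\Omega_{orb}(M)$ induces an isomorphism on cohomology, $H(\M)\cong H^*(M;\RR)$.

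The main obstacle I anticipate is not the homological algebra, which is routine once the sheaf is set up, but the bookkeeping needed to make ``orbifold forms'' behave sheaf-theoretically: one must check that the orbifold charts restrict sensibly to smaller opens, that the notion of $G_p$-invariant form is independent of the chart used (compatibility on overlaps, where two charts $U/G_p$ and $U'/G_q$ can present the same open set), and that the averaging constructions (Poincar\'e homotopy and partition of unity) are compatible across charts. An alternative, possibly cleaner, route to the acyclicity step is to embed $M$ as the quotient of a manifold by a compact group action or to use that near each singular point $B/G_p$ is contractible and apply a Mayer--Vietoris / Leray induction directly; but the fine-sheaf argument above is the most self-contained, so that is what I would write up.
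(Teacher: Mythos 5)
Your argument is correct, but it is a genuinely different route from the one taken in the paper. You prove the de Rham-type isomorphism $H(\Omega_{orb}(M),d)\cong H^*(M;\RR)$ by exhibiting $\Omega^\bullet_{orb}$ as a fine resolution of the constant sheaf: the local Poincar\'e lemma on a chart $B/G_p$ follows because taking $G_p$-invariants is exact in characteristic zero (equivalently, the radial homotopy operator centred at the fixed point is already $G_p$-equivariant for a linear action), and averaged partitions of unity give fineness, so the abstract de Rham theorem applies. The paper instead localizes at the singular set by hand: it introduces the sub-DGA $\A\subset\Omega_{orb}(M)$ of forms vanishing (constant in degree $0$) near each singular point, shows $\A\inc\Omega_{orb}(M)$ is a quasi-isomorphism via bump functions and an orbi-smooth contraction, and then compares the long exact sequence of $0\to\bigoplus_p\B_p\to\A\to\Omega(M\setminus\bar U)\to 0$ (using $H^k(\B_p)=H^k(B,\bd B)^{G_p}$) with the singular cohomology sequence of the pair $(M,M\setminus\bar U)$. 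Your approach is cleaner and more general --- it makes no use of the singularities being isolated, whereas the paper's excision argument does --- and it avoids the sequence-chasing at the end; the only points you should nail down are that sheaf cohomology of $\underline{\RR}$ agrees with singular cohomology (which needs local contractibility of $M$, supplied by the contractibility of $B/G_p$) and the chart-compatibility bookkeeping you already flag. What the paper's approach buys is the intermediate quasi-isomorphism $\A\inc\Omega_{orb}(M)$ itself, which is reused later (in the proof of Theorem \ref{theo:resolution-pseudo}) to transport $a$-Massey products to the resolution; your sheaf argument does not produce that auxiliary model.
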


\begin{proof}
  Let $(\A,d)$ denote the sub-algebra of $(\Omega_{orb}(M),d)$
  such that $\A^0$ consists of functions which are constant on a
  neighbourhood of each singular point, and $\A^k$ consists of
  $k$-forms which are zero on a neighbourhood of each singular
  point. Let us see that
   $$
   (\A,d)\inc (\Omega_{orb}(M),d)
   $$
  is a quasi-isomorphism.

  Let $\a\in\Omega_{orb}(M)$ be closed. Let $p\in \Delta$ and
  consider a neighbourhood $U_p$ of the form $B/G_p$, for $B\subset \RR^n$
  a ball. Then we may
  consider $\a$ as a closed form on $B$. Hence $\alpha$ is
  exact, that is, there exists a form $\beta$ such that
  $\alpha=d\beta$. By averaging by $G_p$ we may assume that
  $\beta$ is $G_p$-equivariant, i.e., $\beta\in\Omega_{orb}(U_p)$.
  Consider a bump function $\rho$ which is zero off $U_p$ and $1$ in
  a smaller neighbourhood of $p$. Then
  $\a-d(\rho\beta)$ is in $\A$ and it is cohomologous to $\a$.
  This proves surjectivity of $H(\A)\to H(\Omega_{orb}(M))$.

  Now suppose that $\a\in \A$ satisfies that $\a=d\beta$, with $\beta\in
  \Omega_{orb}(M)$. Let $V_p=B/G_p$ be a neighbourhood of each $p\in \Delta$, small
  enough so that they are disjoint with the support of $\a$.
  Consider a map $\phi:M\to M$, which
  is the identity off $\bigcup V_p$,
  sending $V_p$ into $V_p$ in such a way that
  it contracts a smaller neighbourhood of $p$ into $p$. We can take $\phi$
  orbi-smooth (that is, it has a $G_p$-equivariant lifting to a map
  $B\to B$ which
  is smooth). So there is a DGA morphism $\phi^*:\Omega_{orb}(M)\to
  \Omega_{orb}(M)$. Then $\a=\phi^*\a =d (\phi^*\beta)$ and
  $\phi^*\beta\in \A$. This proves injectivity of $H(\A)\to H(\Omega_{orb}(M))$.

\bigskip

 With the above at hand, now fix $U_p=B/G_p$  small neighbourhoods of
  $p\in\Delta$. Let $U=\bigcup_{p\in \Delta} U_p$.
  Restriction gives a map $(\A,d)\to (\Omega(M \setminus \bar U),d)$ with kernel
  the DGA $(\B,d)$ consisting of forms which vanish on $M\setminus \bar U$
  and also vanish for positive degrees and are locally constant for degree $0$,
  on a neighbourhood of $\Delta$. Clearly $(\B,d)=\bigoplus_{p\in
  \Delta} (\B_p,d)$, where $\B_p$ consists of forms on $U_p$ which
  vanish on the boundary (so that they can be extended by zero off
  $U_p$) and vanish for positive degrees and are constanta for degree
  $0$,
  on a neighbourhood of $p$. Hence there is a
  exact
  sequence
   $$
   0\to \bigoplus_{p\in
   \Delta} (\B_p,d) \to (\A,d) \to (\Omega(M\setminus \bar U),d) \to
   0\, .
   $$

Working as above, $\B_p\inc \Omega_{orb,0}(B/G_p)$ is a
quasi-isomorphism, where $\Omega_{orb,0}(B/G_p)$ are the orbifold
forms on $B/G_p$ vanishing on the boundary. Clearly,
$\Omega_{orb,0}(B/G_p)=\Omega_0(B)^{G_p}$ is the $G_p$-invariant
part of the forms on $B$ vanishing on the boundary. Thus
 $$
 H^k(\B_p) = H^k(\Omega_0(B))^{G_p}= H^k(B,\bd B)^{G_p}=\left\{
 \begin{array}{ll} \RR, \qquad & k=2n, \\
 0, &\text{otherwise.}
 \end{array} \right.
 $$
This gives an exact sequence
 $$
  H^{k-1}(M\setminus\bar U) \to \bigoplus_{p\in \Delta} H^k(B,\bd B) \to H^k(\A) \to
  H^k(M\setminus\bar U).
  $$
 Together with the exact sequence for singular cohomology
 $$
  H^{k-1}(M\setminus\bar U) \to \bigoplus_{p\in \Delta} H^k(B,\bd B)=H^k(M,M\setminus\bar U)
  \to H^k(M) \to H^k(M\setminus\bar U)\, ,
  $$
%%this gives the desired result.
which shows the desired result.
\end{proof}

\begin{remark}
The concept of formality is already defined for nilpotent
CW-complexes \cite{GM}. In the case that $M$ is an orbifold whose
underlying space is nilpotent, Definition
\ref{def:orbifold-formality} of formality for $M$ agrees with that
in \cite{GM}. For this it is enough to see that if
$(\Omega_{PL}(M),d)$ denotes the complex of piecewise polynomial
differential forms (for a suitable triangulation of $M$), then the
inclusion $(\Omega_{orb}(M),d) \inc (\Omega_{PL}(M),d)$ gives a
quasi-isomorphism.
\end{remark}

%%%%%%%%%%%%%%%%%%%%%%%%%%%%%%%%%%%%%%%%%%%%%%%%%%%%%%
\section{Symplectic resolutions}\label{sec:resolutions}
%%%%%%%%%%%%%%%%%%%%%%%%%%%%%%%%%%%%%%%%%%%%%%%%%%%%%%

%%%%%%%%%%%%%%%%%%%%%%%%%%%%%%%%%%%%%%%%%%%%%%%%%%%%%%
\subsection{Symplectic orbifolds and their resolutions}
%%%%%%%%%%%%%%%%%%%%%%%%%%%%%%%%%%%%%%%%%%%%%%%%%%%%%%

Now we introduce the concepts of symplectic orbifold and
symplectic resolution and show that any symplectic orbifold can be
resolved into a smooth symplectic manifold.

\begin{definition} \label{def:symplectic_orbifold}
  A {\it symplectic orbifold} $(M,\omega)$ is a $2n$-dimensional orbifold $M$ together
  with a $2$-form $\omega\in \Omega^2_{orb}(M)$ such that
  $d\omega=0$ and $\omega^n \neq 0$ at every point.
\end{definition}

\begin{definition} \label{def:resolution}
A {\it symplectic resolution} of a symplectic orbifold
$(M,\omega)$ is a smooth symplectic manifold
$(\tilde{M},\tilde{\omega})$ and a map $\pi:\tilde{M}\to M$ such
that:
  \begin{itemize}
  \item[(a)] $\pi$ is a diffeomorphism $\tilde{M}\setminus E \to M\setminus \Delta$,
  where $\Delta\subset M$ is the singular set and $E=\pi^{-1}(\Delta)$ is
  the {\it exceptional set}.
  \item[(b)] The exceptional set $E$ is a union of possibly intersecting smooth
  symplectic submanifolds of $\tilde{M}$ of codimension  at least 2.
  \item[(c)] $\tilde\omega$ and $\pi^*\omega$ agree in the complement of a
  small neighbourhood of $E$.
  \end{itemize}
\end{definition}

In \cite{NiPas}, it is given a method to obtain resolutions of
symplectic orbifolds arising as quotients pre-symplectic semi-free
$S^1$-actions. The following result gives an alternative method
which is valid for any symplectic orbifold, and which is inspired
in the resolution of isolated quotient singularities of complex
manifolds.

\begin{theorem} \label{theo:resolution}
  Any symplectic orbifold has a symplectic resolution.
\end{theorem}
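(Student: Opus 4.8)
The plan is to work locally around each singular point and glue in a standard K\"ahler resolution model, then correct the symplectic form. Since the singular set $\Delta$ is discrete, it suffices to treat a single singularity $p\in\Delta$ modelled on $B/G_p$ where $B\subset\RR^n=\CC^m$ ($n=2m$) is a ball and $G_p\subset\GL(m,\CC)$ is a finite group acting linearly with $0$ as its only fixed point. Here I would first invoke the linearization of $\omega$ near $p$: by a Darboux-type argument (equivariant Moser), after shrinking $B$ we may assume $\omega$ equals the standard symplectic form $\omega_0$ coming from a $G_p$-invariant Hermitian metric on $\CC^m$, so locally the picture is the \emph{K\"ahler} cone $\CC^m/G_p$ with its flat K\"ahler form.

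The key step is then to produce a symplectic (indeed K\"ahler) resolution of the local model $\CC^m/G_p$. The point is that $\CC^m/G_p$ is an affine variety with an isolated quotient singularity, hence admits a resolution of singularities $\phi:X\to\CC^m/G_p$ by Hironaka, with exceptional divisor $E_0=\phi^{-1}(0)$; one can take $X$ quasi-projective and equip it with a K\"ahler form. The subtlety is matching this K\"ahler form to $\omega_0$ away from $E_0$. For this I would use the standard construction: choose a K\"ahler form $\omega_X$ on $X$ which, on the region $\phi^{-1}(\{|z|>\tfrac12\})\cong\{1/2<|z|<1\}/G_p$, is cohomologous to $\phi^*\omega_0$ (both are exact there since this annular region is homotopy equivalent to the lens-space-like boundary which has vanishing $H^2$ in the relevant cases, or more robustly because $\omega_0=dd^c|z|^2$ is globally $dd^c$-exact on $\CC^m$). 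Writing $\omega_X-\phi^*\omega_0=d\gamma$ on a collar and cutting off $\gamma$ with a bump function supported near the collar, one obtains a closed $2$-form $\tilde\omega$ on $X$ agreeing with $\phi^*\omega_0$ outside a neighbourhood of $E_0$ and with $\omega_X$ near $E_0$; a convexity/interpolation argument (scaling $\omega_X$ by a small $\e>0$ so the cutoff correction stays small in the collar, where $\phi^*\omega_0$ is nondegenerate) ensures $\tilde\omega$ is symplectic everywhere, and positive on $E_0$ since there it equals $\e\,\omega_X$. The submanifolds making up $E_0$ are complex submanifolds of the K\"ahler manifold $(X,\tilde\omega)$ near $E_0$, hence symplectic, giving condition (b).

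Finally I would globalize: remove $\phi^{-1}(\{|z|<1/2\})$-\,no, rather, delete a small ball $B'/G_p$ around each $p$ from $M$ and glue in the corresponding truncated local resolution $\phi^{-1}(B'/G_p)\subset X$ along the annular collar, where by construction the symplectic forms agree. Doing this simultaneously at every $p\in\Delta$ produces a smooth manifold $\tilde M$ with a closed $2$-form $\tilde\omega$ that coincides with $\pi^*\omega$ outside a neighbourhood of $E=\bigcup E_0$ and is everywhere nondegenerate, and a map $\pi:\tilde M\to M$ that is the identity away from the glued regions; this verifies (a), (b), (c).

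The main obstacle is the second step: arranging that the K\"ahler form on the abstract resolution $X$ can be deformed to agree exactly with the flat form $\phi^*\omega_0$ outside a compact neighbourhood of the exceptional set, while remaining nondegenerate throughout the collar. This requires controlling the cohomology class of $\omega_X$ restricted to the collar (so that $\omega_X-\phi^*\omega_0$ is exact there) and then controlling the size of the primitive after cutting off, so that the correction does not destroy nondegeneracy where $\phi^*\omega_0$ alone is small; the scaling trick $\omega_X\rightsquigarrow\e\omega_X$ is what makes this work, but it has to be checked that it is compatible with $\tilde\omega$ still being positive along $E_0$. The rest — equivariant Darboux near $p$, existence of the algebraic resolution, and the patching — is standard.
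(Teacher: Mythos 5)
Your proposal follows essentially the same route as the paper's proof: equivariant Darboux to linearize $\omega$ and $G_p$, averaging to get $G_p\subset U(m)$ so the local model is the K\"ahler quotient $\CC^m/G_p$, Hironaka resolution with a K\"ahler form from a quasi-projective embedding, exactness of the difference of the two forms on the annular collar (because the collar retracts onto $S^{2n-1}/G_p$, a real-cohomology sphere), and the cutoff-plus-$\e$-scaling interpolation. The only cosmetic difference is that near the exceptional set the paper's form is the convex combination $(1-\e)\pi^*\omega+\e\,\Omega$ (nonnegative plus positive $(1,1)$-forms, hence K\"ahler), rather than $\e\,\omega_X$ alone, but this is exactly the convexity argument you invoke.
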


\begin{proof}
Let $p$ be a singular point of $M$. Take an orbifold chart $U_p=
B_p/G_p$ around $p$, where $B_p\subset \RR^{2n}$ is a ball. The
symplectic form $\omega$ is a closed non-degenerate $2$-form
$\omega\in \Omega^2(U)$ invariant by $G_p$. By the equivariant
Darboux theorem, there is a symplectomorphism
$\varphi:(B_p,\omega) \to (B,\omega_0)\subset \RR^{2n}$, where $B$
is the standard ball and $\omega_0$ the standard symplectic form
in $\RR^{2n}$, and  a linear free $G_p$ action on
$\RR^{2n}\setminus\{0\}$ for which the map above is
$G_p$-equivariant (the proof of the existence of usual Darboux
coordinates in \cite[pp.\ 91-93]{McDuff-Salamon} carries over to
this case, only being careful that all the objects constructed
should be $G_p$-equivariant). Therefore, the orbifold admits
charts of the form $B/G_p$, where $B$ a symplectic ball of
$(\RR^{2n},\omega_0)$, such that $G_p$ acts linearly by
symplectomorphisms, that is $G_p\in Sp(2n,\RR)$.

Moreover, since the group $G_p \subset Sp(2n,\RR)$ is finite, we
may take a metric on $\RR^{2n}$ compatible with $\omega_0$ and
average it with respect to $G_p$. This gives a metric compatible
with $\omega_0$ and invariant by $G_p$ therefore producing a
$G_p$-invariant complex structure on $B$, so that we may interpret
$B\subset \CC^n$ and we have that $G_p\subset U(n)$. This induces
a complex structure $I$ on $B/G_p$, and
 $$
 B/G_p\subset \CC^n/G_p\,
 $$
has the complex and symplectic structure induced from the
canonical complex and symplectic structures of $\CC^n$.

For $n=1$, the only finite subgroups of $U(1)$ are cyclic groups
$\ZZ_m\subset U(1)$, and hence $B/G_p=\CC/\ZZ_m$ is already
non-singular. So in this case $M$ has already the structure of
smooth symplectic manifold.

For $n>1$, we work as follows. For each $p\in \Delta$ consider a
K\"ahler structure in a ball $U_p = B/G_p$ around $p$ as above,
where $B\subset \CC^n$. The singular complex variety $X=\CC^n/G_p$
is an affine algebraic variety with a single singularity at the
origin. We can take an algebraic resolution of the singularity (it
always can be done \cite{Hironaka} by successive blow-up along
smooth centers, starting with a single blowing-up at $p$), which
is a quasi-projective variety $\pi_X:\tilde X\to X$. The
exceptional set is a complex submanifold $E=\pi_X^{-1}(0)$.
Consider some embedding $\tilde X\subset \PP^N$ and let $\Omega$
be the induced K\"ahler form on $\tilde X$. Now let $\tilde{U}
=\pi^{-1}_X(U)$. We glue $\tilde{U}$ to $M\setminus \{p\}$ by
identifying $\tilde{U}\setminus E$ with $U_p\setminus \{p\}$ via
$\varphi^{-1}\circ\pi_X$, to get a smooth manifold
 $$
 \tilde{M}= (M\setminus \{p\}) \cup \tilde{U}\, .
 $$
There is an obvious projection $\pi:\tilde M\to M$.

We want to define a symplectic structure $\tilde\omega$ on
$\tilde{M}$ which equals $\omega$ on $M\setminus U_p$. Consider
the form $\varphi_*\omega$ on $U=B/G_p$ and its pull-back to
$\tilde{U}$ via $\pi_X$, $\omega'=\pi_X^*\varphi_*\omega$. The
annulus
 $$
 A= \left({\frac23}\bar B \setminus \frac13B\right) /G_p\subset B/G_p
 $$
is homotopy equivalent to $S^{2n-1}/G_p$, so we have that $\Omega-
\omega'=d\alpha$ on $A$, for some $\alpha\in \Omega^1(A)$. Let
$\rho$ be a bump function which equals zero in $(B\setminus
\frac23 B)/G_p$, and which equals one in $\frac13 B/G_p$. Define
  \begin{equation}\label{eqn:car2}
  \tilde\omega=\omega' + \epsilon \ d(\rho \alpha)\, .
  \end{equation}

Then $\tilde\omega= \omega'=\pi^*\omega$ on $(B\setminus \frac23
B)/G_p$, so it can be glued with $\omega$ to define a smooth
closed $2$-form on $\tilde{M}$. On $\frac13 B/G_p$, we have
  \begin{equation}\label{eqn:car}
  \tilde{\omega}= \omega'+ \epsilon (\Omega-\omega')=
   (1-\epsilon) \pi^*\omega +\epsilon \ \Omega \, .
  \end{equation}
Clearly, such $\tilde\omega$ is symplectic on $\frac13 B/G_p$
(actually it is a K\"ahler form there). Finally, as $A$ is
compact, the norm of  $d(\rho \alpha)$ on $A$ is bounded. As
$\omega'$ is symplectic on $A$, choosing $\epsilon>0$ small
enough, we get that $\tilde\omega$, defined in (\ref{eqn:car2}),
is also symplectic.
\end{proof}

Observe that for the symplectic resolution constructed in this
theorem, we can say more about the exceptional set since it is
modelled in the resolution of a  singularity on an algebraic
variety. Indeed, besides the conditions (a) -- (c) from Definition
\ref{def:resolution}, $\tilde{M}$ also satisfies
\begin{itemize}
 \item[(d)] There exists a complex structure $I$ on a \nhood\ of
 $E$ so that $(\tilde\omega,I)$ is a K\"ahler structure.
 \item[(e)] For the complex structure $I$ from (d) and $p\in
 \Delta$  one can find a complex structure in a \nhood\ $U$ of $p$
 making it K\"ahler and such that the resolution map
 $\pi:\tilde{U}\to U$ is holomorphic.
\end{itemize}

%%%%%%%%%%%%%%%%%%%%%%%%%%%%%%%%%%%%%%%%%%%%%%%%%%%%%%%%%%%%%%%%%%%%
\subsection{Cohomology of resolutions}
%%%%%%%%%%%%%%%%%%%%%%%%%%%%%%%%%%%%%%%%%%%%%%%%%%%%%%%%%%%%%%%%%%%%

%%Now
We study the  singular cohomology of symplectic resolutions
with real coefficients.  For the symplectic resolution
$\tilde{M}$, this is isomorphic to the de Rham cohomology. For the
orbifold $M$, this is isomorphic to the orbifold de Rham
cohomology.

\begin{proposition}\label{prop:little-lemma}
  Let $\pi:\tilde{M}\to M$ be a symplectic resolution of a symplectic
  orbifold. For each $p\in \Delta$, let $E_p=\pi^{-1}(p)$ be the exceptional set.
  Then there is a split short exact sequence
  $$
   H^k(M) \stackrel{\pi^*}{\longrightarrow} H^k(\tilde M)
 \stackrel{i^*}{\longrightarrow}  \prod_{p\in \Delta} H^k(E_p) \,
 ,
   $$
   for $k>0$.
\end{proposition}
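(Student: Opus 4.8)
The plan is to build the exact sequence from the excision/Mayer--Vietoris comparison between $M$ near its singular points and $\tilde M$ near the exceptional set, and then to split it using the symplectic class $[\tilde\omega]$. Let $U=\bigsqcup_{p\in\Delta}U_p$ be disjoint orbifold neighbourhoods $U_p=B_p/G_p$ of the singular points as in Theorem~\ref{theo:resolution}, and let $\tilde U=\pi^{-1}(U)=\bigsqcup_p\tilde U_p$. Since each $U_p$ is contractible (it is a cone on a rational homology sphere) and deformation retracts onto $p$, while $M\setminus\Delta$ and $\tilde M\setminus E$ are identified by $\pi$, comparing the Mayer--Vietoris sequences (in orbifold de~Rham cohomology for $M$, ordinary de~Rham cohomology for $\tilde M$) for the covers $\{U,M\setminus\Delta\}$ and $\{\tilde U,\tilde M\setminus E\}$ gives, for $k>0$, a diagram whose columns on the ``$M\setminus\Delta$'' and ``$U\cap(M\setminus\Delta)=\tilde U\cap(\tilde M\setminus E)$'' factors are isomorphisms, while the $U_p$ column is $H^k(U_p)=0$ against the $\tilde U_p$ column $H^k(\tilde U_p)$. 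Since each $\tilde U_p$ deformation retracts onto $E_p$ (the resolution $\tilde X\to X$ restricted to a small ball is, after shrinking, a retraction onto the exceptional fibre), $H^k(\tilde U_p)\cong H^k(E_p)$. Chasing this diagram — equivalently, using that $H^k(M)\cong H^k(M,U)=H^k(M\setminus\Delta,\,U\cap(M\setminus\Delta))$ and the long exact sequence of the pair $(\tilde M,\tilde U)$ — one extracts exactly
\begin{equation*}
0\longrightarrow H^k(M)\stackrel{\pi^*}{\longrightarrow}H^k(\tilde M)\stackrel{i^*}{\longrightarrow}\prod_{p\in\Delta}H^k(E_p),
\end{equation*}
where $i^*$ is restriction to the exceptional submanifolds. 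Injectivity of $\pi^*$ here uses that the connecting map $\prod_p H^{k-1}(E_p)\to H^k(M)$ is zero, which follows because $H^{k-1}(\tilde U_p\setminus E_p)\to H^{k-1}(E_p)$ is onto (the $U_p\setminus\{p\}\simeq\partial B_p/G_p$ side surjects onto its own cohomology, and $H^{k-1}$ of an orbifold ball-minus-point maps into $H^k(M)$ trivially for $k>0$); I would phrase this cleanly via the orbifold analogue of excision already implicit in Proposition~\ref{prop:model-orbifold}.

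For the splitting, the plan is to produce a left inverse to $\pi^*$. The natural candidate is a Gysin-type ``integration over the fibre'' or, more elementarily, the map $\pi_!$ constructed from the fact that $\pi$ is a proper degree-one map: since $\pi$ is a diffeomorphism away from $E$, for a closed form $\beta$ on $\tilde M$ one sets $\pi_!\beta$ to be $\beta$ transported to $M\setminus\Delta$ and then shows it extends as an orbifold form across $\Delta$, giving $\pi_!\pi^*=\mathrm{id}$ on $H^k(M)$. Alternatively, and perhaps most robustly, one splits degree by degree using the hard Lefschetz-type pairing: the symplectic resolution is Kähler near $E$ with Kähler class $[\tilde\omega]$ restricting to a Kähler class on each $E_p$, so each $H^k(E_p)$ carries a perfect Poincaré pairing and one can build a section of $i^*$ over the image; combined with the fact that $\ker i^*=\Im\pi^*$ this gives the direct sum decomposition $H^k(\tilde M)\cong \pi^*H^k(M)\oplus(\text{lift of }\Im i^*)$. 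I would present the first (transfer) argument as the primary one, since it does not need the Kähler hypothesis and works at the level of forms, and remark that the splitting is canonical.

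The main obstacle is the interface between orbifold de~Rham cohomology on $M$ and ordinary de~Rham cohomology on $\tilde M$: one must be careful that all the comparison maps (Mayer--Vietoris, excision, the long exact sequence of a pair) are available in the orbifold setting and agree with singular cohomology with real coefficients. Proposition~\ref{prop:model-orbifold} and its proof already supply the key technical input — that $H^*(\Omega_{orb}(M))\cong H^*(M;\RR)$ and that the local model $B/G_p$ has the rational cohomology of a point while $\partial B/G_p$ is a rational homology sphere — so the remaining work is to assemble these into the stated short exact sequence and check naturality of $\pi^*$ and $i^*$. A secondary subtlety is verifying that $\tilde U_p$ genuinely deformation retracts onto $E_p$; this follows from the algebraic structure of the resolution (property (e) in the remark after Theorem~\ref{theo:resolution}: $\pi$ is holomorphic near $p$, and the resolution of a cone singularity retracts onto its exceptional fibre), so I would cite that structure rather than reprove it.
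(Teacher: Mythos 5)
Your overall strategy --- compare the Mayer--Vietoris sequences for the covers $\{U,\,M\setminus\Delta\}$ and $\{\tilde U,\,\tilde M\setminus E\}$, using that $U_p$ is a rational homology ball whose link is a rational homology sphere and that $\tilde U_p$ retracts onto $E_p$, and then split the sequence by a retraction of $\pi^*$ given by restriction to the complement of the exceptional set --- is exactly the paper's proof. The paper's map $\psi$ is your $\pi_!$, defined as the composition $H^k(\tilde M)\to H^k(\tilde M\setminus E)\cong H^k(M\setminus\Delta)\cong H^k(M)$; the paper additionally writes down an explicit section $f_p$ of $i^*$ through $H^k(\tilde U_p,\partial\tilde U_p)\cong H^k(\tilde M,\tilde M\setminus\tilde U_p)$, which it needs later for the product formula in Proposition \ref{prop:local_product}, but for the bare splitting your retraction of $\pi^*$ suffices.

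Three details in your write-up would not survive as stated. First, $\pi_!$ cannot be defined at the level of forms: the restriction of a closed form on $\tilde M$ to $\tilde M\setminus E\cong M\setminus\Delta$ does not in general extend as an orbifold form across $\Delta$, since the identification degenerates at $E$. What is true, and what you actually need, is that $H^k(M)\to H^k(M\setminus\Delta)$ is an isomorphism for $0<k<2n-1$ because the link $S^{2n-1}/G_p$ is a rational homology sphere; so the retraction exists on cohomology only, which is enough. Second, your justification of injectivity via ``$H^{k-1}(\tilde U_p\setminus E_p)\to H^{k-1}(E_p)$ is onto'' is backwards: the natural (restriction) map goes from $H^{k-1}(E_p)\cong H^{k-1}(\tilde U_p)$ to $H^{k-1}(\tilde U_p\setminus E_p)$, and the relevant vanishing is $H^{j}(\tilde U_p\setminus E_p)\cong H^{j}(S^{2n-1}/G_p)=0$ for $0<j<2n-1$; in any case this argument becomes redundant once $\pi_!\pi^*=\mathrm{id}$ is in place. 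Third, everything above covers only $0<k<2n-1$: for $k=2n-1$ and $k=2n$ the map $H^k(M)\to H^k(M\setminus\Delta)$ need not be surjective and your retraction is not available, so you need the separate (easy) argument the paper gives, namely that $H^k(E_p)=0$ in these degrees and a five-lemma comparison of the two Mayer--Vietoris sequences shows $\pi^*$ is an isomorphism there.
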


\begin{proof}
We may assume that $\Delta$ consists only of one point $p$, since
the general case follows from that by doing the resolution of the
singularities one by one and taking the inverse limit in the
noncompact case.

Let us see now that $i^*$ is surjective. For the singular point
$p$, let $U_p = B_p/G_p$ be a small ball around $p$ and
$\tilde{U}_p= \pi^{-1}(U_p)$ the corresponding \nhood\ of the
exceptional set $E_p$. Then $G_p$ acts freely and linearly on
$B_p\setminus \{p\}$. By choosing an invariant metric, we see that
$B_p$ is foliated by spheres invariant under the $G_p$ action, so
not only is $B_p\setminus \{p\}$ a deformation retract of the
rational homology sphere $S^{2n-1}/G_p$ but also $U_p =
(B_p\setminus\{p\})/G_p$ is a deformation retract of
$S^{2n-1}/G_p$. In particular $\tilde{U}_p\setminus E_p \cong
U_p\setminus\{p\}$ has the same real cohomology as $S^{2n-1}$.

Using the long exact sequence for relative cohomology for the pair
$(\tilde{U}_p,\del \tilde{U}_p)$, one easily sees that
$H^k(\tilde{U}_p,\del \tilde{U}_p)\cong H^k(\tilde{U}_p)\cong
H^k(E_p)$, for $0<k<2n-1$. For $k=2n-1$, we get
 $$
 0 \to H^{2n-1}(\tilde{U}_p,\del \tilde U_p) \to H^{2n-1}(\tilde U_p)=0
 \to H^{2n-1}(\bd \tilde U_p) =\RR \to H^{2n}(\tilde{U}_p,\del \tilde
 U_p) \to H^{2n}(\tilde U_p)=0 \, ,
 $$
so that $H^{2n-1}(\tilde{U}_p,\del \tilde U_p) =0$ and
$H^{2n}(\tilde{U}_p,\del \tilde U_p) =\RR$. Actually, as
$\tilde{U}_p$ is a compact oriented connected manifold with
boundary, $H^{2n}(\tilde{U}_p,\del \tilde U_p)$ is generated by
the fundamental class $[\tilde{U}_p,\del \tilde U_p]$. So we have
a map given as the composition
 $$
 f_p: H^k(E_p)\cong H^k(\tilde{U}_p) \cong H^k(\tilde{U}_p,\bd \tilde{U}_p)=
 H^k(\tilde{M},\tilde{M}\setminus \tilde{U}_p) \inc
 H^k(\tilde{M})\,,
$$
for $k>0$. It is easy to see that $i^*\circ f_p$ is the identity,
thus proving the surjectivity of $i^*$.

Now we prove that $\pi^*$ is injective. We define a map $\psi:
H^k(\tilde{M})\to H^k(M)$ for $0<k<2n-1$, as follows. The
Mayer-Vietoris exact sequence of $M=(M \setminus \{p\})\cup U_p$
gives
 $$
 \ldots \to H^{k-1}(S^{2n-1}/G_p) \to  H^k({M})
 \to H^k(M\setminus\{p\})\oplus H^k(U_p)\to H^{k}(S^{2n-1}/G_p)\to
 \ldots\, ,
 $$
so that $H^k(M)\cong H^k(M\setminus \{p\})$, for $0<k<2n-1$. We
define $\psi$ as the composition
 \begin{equation}\label{eq:psi:Mtilde into M}
 \psi:H^k(\tilde{M})  \into H^k(\tilde{M}\setminus E_p)
 \stackrel{\cong}{\into} H^k(M \setminus
 \{p \})\stackrel{\cong}{\into} H^k(M), \qquad k < 2n-1,
 \end{equation}
using that $\pi: \tilde{M}\setminus E_p \into M \setminus \{p\}$
is a diffeomorphism. Clearly, $\psi\circ \pi^*$ is the identity,
so that $\pi^*$ is injective for $0<k<2n-1$. For $k=2n-1$ and
$k=2n$, we have that $H^k(E_p)=0$, and there is a diagram whose
rows are exact sequences:
 $$
 \begin{array}{cccccc}
0 \to  H^{2n-1}({M}) \to& H^{2n-1}(M\setminus\{p\}) &\to
H^{2n-1}(S^{2n-1}/G_p) \to&  H^{2n}({M}) &\to
H^{2n}(M\setminus\{p\}) \to 0 \\
   \downarrow &\parallel & \parallel & \downarrow & \parallel
 \\
0\to H^{2n-1}(\tilde{M}) \to& H^{2n-1}(\tilde M\setminus E_p) &\to
H^{2n-1}(S^{2n-1}/G_p) \to& H^{2n}(\tilde{M}) &\to H^{2n}(\tilde
M\setminus E_p) \to 0\, ,
\end{array}
 $$
which proves the assertion.

It remains to see that the sequence is exact in the middle.
Clearly $\pi^*\circ i^*=0$. Also, for $k=2n-1,2n$ the statement is
clear, since the previous paragraph proves that in this case
$H^k(M)=H^k(\tilde M)$. For $0<k<2n-1$ we work as follows. We
write $\tilde{M}$ as a union of open sets, $\tilde{M} =
(M\setminus\{p\}) \cup \tilde{U}_p$, whose intersection
$(M\setminus\{p\}) \cap \tilde{U}_p = U_p\setminus\{p\}$ is
homotopic to $S^{2n-1}/G_p$. Since $\tilde{U_p}$ is a deformation
retract of $E_p$, the Mayer-Vietoris exact sequence gives
 $$
 \ldots \to  H^{k-1}(S^{2n-1}/G_p) \to  H^{k}(\tilde{M}) \to
 H^{k}(\tilde M\setminus E_p) \oplus H^k(E_p) \to
 H^{k}(S^{2n-1}/G_p)\to \ldots
 $$
For $0<k<2n-1$ we have isomorphisms $H^k(\tilde{M}) \cong
H^k(M\setminus \{p\})\oplus H^k(E_p) \cong H^k(M)\oplus H^k(E_p)$.
Actually, this map equals the map $(\psi, i^*)$, with $\psi$
defined in (\ref{eq:psi:Mtilde into M}). Hence the sequence is
exact in the middle.
\end{proof}

The last piece of data  we need to describe the product in
$H(\tilde{M})$ is the pairing
 $$
 H^k(\tilde{U}_p) \otimes H^{2n-k}(\tilde{U}_p,\bd \tilde{U}_p) \into H^{2n}(\tilde{U}_p,\bd
 \tilde{U}_p)\cong \RR\, ,
 $$
where the last isomorphism is given by integration on the
fundamental class $[\tilde{U}_p,\bd\tilde{U}_p]$. Combining this
pairing with the isomorphisms
 $$
 H^k(\tilde{U}_p,\bd \tilde{U}_p) \cong H^k(\tilde{U}_p)\cong H^k(E_p), \, 1\leq k \leq
 2n-1\,,
 $$
we have a map (the local intersection product),
 \begin{equation}\label{eq:local intersection}
 F_p: H^k(E_p) \otimes H^{2n-k}(E_p) \into \RR\,,
 \end{equation}
for $k=1,\ldots, 2n-1$.

\begin{proposition} \label{prop:local_product}
  Let $\pi:\tilde{M}\to M$ be a symplectic resolution of a compact connected symplectic
  orbifold, and
  let $\Psi:H^k(\tilde M)\to H^k(M)\oplus (\oplus H^k(E_p))$ be the isomorphism given by
  the split exact sequence in Proposition \ref{prop:little-lemma}, for $k>0$.
  Consider
  $a\in H^k(\tilde{M})$ and $b\in H^l(\tilde{M})$, with $k,l>0$, and denote
  $\Psi(a)=(a_1,(a_p)_{p\in \Delta})$ and
  $\Psi(b)=(b_1,(b_p)_{p\in \Delta})$. Then
   $$
   \Psi(a\cup b)= \left\{\begin{array}{ll}
  (a_1\cup b_1, (a_p\cup b_p)_{p\in\Delta}) , \qquad & k+l<2n\, ,
  \\[5mm]
  (a_1\cup b_1 + \sum\limits_{p\in\Delta} F_p(a_p,b_p), 0) , & k+l=2n\, .
 \end{array}\right.
   $$
\end{proposition}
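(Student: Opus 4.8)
The plan is to verify the formula by working at the level of differential forms and exploiting the explicit splitting constructed in the proof of Proposition~\ref{prop:little-lemma}. Recall that the splitting $\Psi = (\psi, (i^*)_{p\in\Delta})$ uses two maps: $\psi$ which (for $k<2n-1$) is restriction to $\tilde M\setminus E_p$ followed by the identification with $M\setminus\{p\}$ and then with $M$; and, for each $p$, the map $i^*$ which is restriction to (a neighbourhood of) $E_p$, with one-sided inverse $f_p$ obtained by pushing forward compactly-supported classes on $\tilde U_p$. Fix $U = \bigcup_p U_p$ with $U_p = B_p/G_p$ small disjoint balls, and let $\tilde U = \pi^{-1}(U)$. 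I would represent $a\in H^k(\tilde M)$ by a closed form which, on each $\tilde U_p$, is the pull-back under the deformation retraction of a closed form representing $a_p\in H^k(E_p)$; this is possible precisely because $\tilde U_p$ deformation retracts onto $E_p$. Concretely, write $a = \pi^*\tilde a_1 + \sum_p f_p(a_p)$ at the form level, where $\tilde a_1$ is supported away from a neighbourhood of $\Delta$ and represents $a_1$, and each representative of $f_p(a_p)$ is supported in $\tilde U_p$ (using the compactly supported model $H^k(\tilde U_p,\bd\tilde U_p)\cong H^k(\tilde U_p)$). Do the same for $b$.

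First I would treat the case $k+l<2n$. Multiplying the two decompositions, the cross terms $\pi^*\tilde a_1\wedge f_p(b_p)$ and $f_p(a_p)\wedge f_q(b_q)$ with $p\neq q$ vanish on the nose (disjoint supports), and the term $\pi^*\tilde a_1\wedge \pi^*\tilde b_1 = \pi^*(\tilde a_1\wedge\tilde b_1)$ contributes $a_1\cup b_1$ in the first slot. The remaining terms $f_p(a_p)\wedge f_p(b_p)$ are supported in $\tilde U_p$ and restrict on $E_p$ to (a representative of) $a_p\cup b_p$; since $k+l < 2n$, the class $a_p\cup b_p$ lies in $H^{k+l}(E_p)$ with $0 < k+l < 2n-1$ wait --- actually $k+l$ could equal $2n-1$; but in that range $H^{k+l}(\tilde U_p,\bd\tilde U_p)\cong H^{k+l}(\tilde U_p)\cong H^{k+l}(E_p)$ still holds by the computation in Proposition~\ref{prop:little-lemma}, so $f_p(a_p)\wedge f_p(b_p)$ represents $f_{p}(a_p\cup b_p)$ and contributes $a_p\cup b_p$ in the $p$-slot and nothing to $a_1$ (its support misses a neighbourhood of $\Delta$, hence $\psi$ kills it). This gives the first line.

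Now the case $k+l = 2n$. The only new phenomenon is that the self-products $f_p(a_p)\wedge f_p(b_p)$ now live in top degree $2n$ and need not be exact; they represent a multiple of the generator of $H^{2n}(\tilde U_p,\bd\tilde U_p)\cong\RR$, and evaluating on the fundamental class $[\tilde U_p,\bd\tilde U_p]$ gives exactly $F_p(a_p,b_p)$ by the definition~\eqref{eq:local intersection} of the local intersection product. Since everything is now in degree $2n$, the second slot of $\Psi$ is automatically zero ($H^{2n}(E_p)=0$ as $E_p$ has complex dimension $<n$, being a proper exceptional set); and in the first slot one has $a_1\cup b_1$ from the $\pi^*$-term plus $\sum_p F_p(a_p,b_p)$ from the self-products, the latter being interpreted via the identification $H^{2n}(M)\cong\RR$ (which exists because $M$ is compact connected). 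The cross terms again vanish by disjointness of supports. This yields the second line.

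The step I expect to be the main obstacle is the bookkeeping needed to justify that the self-product $f_p(a_p)\wedge f_p(b_p)$, a priori only a class in $H^{k+l}(\tilde U_p,\bd\tilde U_p)$, corresponds under the chain of isomorphisms $H^{k+l}(\tilde U_p,\bd\tilde U_p)\cong H^{k+l}(\tilde U_p)\cong H^{k+l}(E_p)$ to $a_p\cup b_p$ (when $k+l<2n$) respectively, under the evaluation pairing, to $F_p(a_p,b_p)$ (when $k+l=2n$) --- in other words, that the cup product on $\tilde M$, localized to $\tilde U_p$, is compatible with the local intersection product as defined. This is essentially the statement that cup product commutes with the extension-by-zero map $H^\bullet(\tilde U_p,\bd\tilde U_p)\to H^\bullet(\tilde M)$ in the appropriate bidegrees, which follows from the naturality of cup products for pairs, but it requires care to set up the right relative/compactly-supported representatives so that products and restrictions commute. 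Once that compatibility is in hand, the rest is the support-disjointness argument above.
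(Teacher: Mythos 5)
Your proposal is correct and follows essentially the same route as the paper: decompose $a$ and $b$ according to the splitting $\Psi$, kill the cross terms by a support/restriction argument, identify the $\pi^*$-term with $a_1\cup b_1$, and reduce the self-terms to the relative cup product on $(\tilde U_p,\bd\tilde U_p)$, which gives $a_p\cup b_p$ below top degree and $F_p(a_p,b_p)$ in degree $2n$. The paper argues class-by-class via the restriction maps $\psi$ and $i^*$ rather than with explicitly supported form representatives, but the substance, including the compatibility of the localized cup product with $F_p$ that you flag as the delicate point, is the same.
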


\begin{proof}
As in the proof of Proposition \ref{prop:little-lemma}, it is
enough to do the case where there is only one singular point $p$.

Consider $a_1,b_1\in H^*(M)$, and let
$a=\Psi^{-1}(a_1,0)=\pi^*(a_1)$, $b=\Psi^{-1}(b_1,0)=\pi^*(b_1)$.
Then $\Psi^{-1}(a_1\cup b_1,0)= \pi^*(a_1\cup b_1)=a\cup b$, i.e.
$\Psi (a\cup b)=(a_1\cup b_1,0)$.

Now consider $a_1\in H^k(M)$, $b_p \in H^l(E_p)$, and let
$a=\Psi^{-1}(a_1,0)=\pi^*(a_1)$, $b=\Psi^{-1}(0,b_p)$. Then $b$
lies in the image of $H^l(\tilde{U}_p,\bd \tilde{U}_p)=
H^l(\tilde{M},\tilde{M}\setminus \tilde{U}_p) \inc
H^l(\tilde{M})$. So $a\cup b$ restricted to $\tilde{M} \setminus
\tilde U_p$ is zero, i.e., $\psi(a\cup b)=0$. On the other hand,
the restriction of $a$ to $E_p$ is zero, so $(a\cup b)|_{E_p}=0$.
Therefore $\Psi(a\cup b)=0$.

Finally, consider $a_p\in H^k(E_p)$, $b_p \in H^l(E_p)$, and let
$a=\Psi^{-1}(0,a_p)$, $b=\Psi^{-1}(0,b_p)$. Clearly, $(a\cup
b)|_{E_p}=a_p\cup b_p$. On the other hand, if $k+l<2n$, $\psi(a
\cup b)=0$, since $(a\cup b)|_{M\setminus U_p}=0$. If $k+l=2n$,
then $a\cup b$ is the image of $a_p\cup b_p$ under the map
 $$
 H^{2n}(\tilde{U}_p,\bd \tilde{U}_p)\to
 H^{2n}(\tilde{M},\tilde{M} \setminus \tilde{U}_p) \to H^{2n}(\tilde{M})=\RR\, .
 $$
Then $\Psi(a\cup b)=(F_p(a_p,b_p),0)$.
\end{proof}

\begin{remark}
The pairing $F_p$ is non-degenerate. We can prove this as follows:
take a compact orbifold $M$ with just one
%%one
singular point $p$
of the required type (see the proof of Theorem \ref{thm:3.9} where
a construction of such an orbifold is done). Then $\tilde M$ is a
compact oriented manifold, hence the intersection product
$H^k(\tilde M)\otimes H^{n-k}(\tilde M)\to \RR$ satisfies Poincar{\'e}
duality. By Proposition \ref{prop:local_product}, under the
isomorphism $\Psi:H^k(\tilde M)=H^k(M)\oplus H^k(E_p)$, the
intersection product of $H^k(\tilde M)$ decomposes as the
intersection product on $H^k(M)$ and the pairing $F_p$ on
$H^k(E_p)$. Hence both should be non-degenerate.

The non-degeneracy of $F_p$ implies that $\dim H^k(E_p) =\dim
H^{2n-k}(E_p)$. In particular, $H^1(E_p)=0$.
\end{remark}

%%%%%%%%%%%%%%%%%%%%%%%%%%%%%%%%%%%%%%%%%%%%%%%%%%%%%%%%%%%%%%%%%%%%%%%%%%%%%%%%
\subsection{The Lefschetz property and resolutions}
%%%%%%%%%%%%%%%%%%%%%%%%%%%%%%%%%%%%%%%%%%%%%%%%%%%%%%%%%%%%%%%%%%%%%%%%%%%%%%%%

%%Now we study how the Lefschetz property behaves under symplectic resolutions.
Now we study how the Lefschetz property behaves under symplectic resolutions, and
%%We prove that the resolution $(\tilde{M},\tilde\omega)$, constructed in
prove that the resolution $(\tilde{M},\tilde\omega)$, constructed in
Theorem \ref{theo:resolution}, satisfies the Lefschetz
property \iff\ $(M,\omega)$ does.

Let $\pi:(\tilde{M},\tilde{\omega})\to (M,\omega)$ be a symplectic
resolution. Let $p$ be a singular point of $M$, then we have a
local intersection
 $$
 F_p: H^k(E_p)\otimes H^{2n-k}(E_p) \to \RR\, .
 $$

\begin{definition} \label{def:local-lefschetz}
 We say that the resolution satisfies the local Lefschetz property
 at $E_p$ if the map
 \begin{equation}\label{eqn:lefschetz}
 [\tilde\omega]^{n-k}:H^k(E_p)\to H^{2n-k}(E_p)
 \end{equation}
is an isomorphism for $k=1,\ldots, 2n-1$.
\end{definition}

Note that the above definition only depends on the restriction of
$[\tilde\omega]$ to $E_p$.

\begin{proposition} \label{prop:Lefs}
Let $\pi:(\tilde{M},\tilde\omega)\to (M,\omega)$ be a symplectic
resolution of a symplectic orbifold of dimension $2n$. Suppose
that $\pi$ satisfies the local Lefschetz property at every
divisor $E_p$, $p\in \Delta$. Then, for any $k=1,\ldots, n$, the
kernel of
 $$
 [\tilde\omega]^{n-k}: H^k(\tilde M) \into H^{2n-k}(\tilde M)\, .
 $$
is isomorphic to the  kernel of
 $$
 [\omega]^{n-k}: H^k(M) \into H^{2n-k}(M)\, .
 $$

In particular, if $(M,\omega)$ satisfies the Lefschetz property so does
$(\tilde{M},\tilde\omega)$.
\end{proposition}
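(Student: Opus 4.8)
The plan is to transport the question through the isomorphism $\Psi\colon H^k(\tilde M)\isom H^k(M)\oplus\bigoplus_{p\in\Delta}H^k(E_p)$ furnished by Proposition~\ref{prop:little-lemma} (and already exploited in Proposition~\ref{prop:local_product}), and to check that under $\Psi$ the operator $[\tilde\omega]^{n-k}$ becomes a ``diagonal'' map whose blocks are $[\omega]^{n-k}$ on $H^k(M)$ and $(i_p^*[\tilde\omega])^{n-k}$ on each $H^k(E_p)$, where $i_p\colon E_p\inc\tilde M$. The case $k=n$ is trivial ($n-k=0$), so assume $1\le k\le n-1$. First I would note that $\Psi([\tilde\omega])=\big([\omega],(i_p^*[\tilde\omega])_{p\in\Delta}\big)$: the component of $\Psi([\tilde\omega])$ in $H^2(E_p)$ is the restriction $i_p^*[\tilde\omega]$ by the very definition of $i^*$, while its component in $H^2(M)$ is $[\omega]$ because by condition~(c) of Definition~\ref{def:resolution} the forms $\tilde\omega$ and $\pi^*\omega$ differ by an exact form on the complement of a neighbourhood of $E$, and the splitting $H^2(\tilde M)\to H^2(M)$ is computed by restriction to $\tilde M\setminus E$.

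The crux is the identity
$$\Psi\big([\tilde\omega]^{j}\cup a\big)=\Big([\omega]^{j}\cup a_1,\;\big((i_p^*[\tilde\omega])^{j}\cup a_p\big)_{p\in\Delta}\Big),\qquad 0\le j\le n-k,$$
for $a\in H^k(\tilde M)$ with $\Psi(a)=(a_1,(a_p)_{p\in\Delta})$, which I would prove by induction on $j$, the case $j=0$ being just the definition of $\Psi(a)$. In the inductive step one multiplies $[\tilde\omega]$ (degree $2$) by $[\tilde\omega]^{j}\cup a$ (degree $2j+k$), both of positive degree, and the point is that their product has degree $2(j+1)+k\le 2(n-k)+k=2n-k<2n$; hence Proposition~\ref{prop:local_product} applies in its first case $k+l<2n$, \emph{without} the local intersection terms $F_p$, and combined with the formula for $\Psi([\tilde\omega])$ this produces the $(j+1)$-st instance. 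The only subtlety of the whole argument is this \emph{degree bookkeeping}: it is precisely the hypothesis $k\le n$ --- equivalently, that the target lies in degree $2n-k<2n$ --- that keeps us in the regime where the pairings $F_p$ play no role, so this is the step to be careful about.

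With the case $j=n-k$ in hand, and using that $\Psi$ is an isomorphism in degrees $k$ and $2n-k$ (both positive since $1\le k\le n-1$), a class $a\in H^k(\tilde M)$ lies in $\ker\big([\tilde\omega]^{n-k}\big)$ if and only if $[\omega]^{n-k}\cup a_1=0$ and $(i_p^*[\tilde\omega])^{n-k}\cup a_p=0$ for every $p\in\Delta$. The hypothesis that $\pi$ satisfies the local Lefschetz property at each $E_p$ says exactly that $(i_p^*[\tilde\omega])^{n-k}\colon H^k(E_p)\to H^{2n-k}(E_p)$ is an isomorphism (Definition~\ref{def:local-lefschetz}), in particular injective, so $a_p=0$ for all $p$; hence $\Psi$ restricts to an isomorphism from $\ker\big([\tilde\omega]^{n-k}\colon H^k(\tilde M)\to H^{2n-k}(\tilde M)\big)$ onto $\ker\big([\omega]^{n-k}\colon H^k(M)\to H^{2n-k}(M)\big)$, which is the first assertion. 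For the last statement, if $(M,\omega)$ has the Lefschetz property then $[\omega]^{n-k}$ is an isomorphism for every $k$, and together with the isomorphisms $(i_p^*[\tilde\omega])^{n-k}$ coming from the local Lefschetz property the displayed identity exhibits $[\tilde\omega]^{n-k}$ as conjugate, via the isomorphisms $\Psi$ in degrees $k$ and $2n-k$, to a direct sum of isomorphisms; hence $[\tilde\omega]^{n-k}$ is itself an isomorphism and $(\tilde M,\tilde\omega)$ has the Lefschetz property.
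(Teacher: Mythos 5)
Your proposal is correct and follows essentially the same route as the paper: both transport $[\tilde\omega]^{n-k}$ through the splitting $\Psi$ of Proposition~\ref{prop:little-lemma} and use Proposition~\ref{prop:local_product} to see it act diagonally, with the local Lefschetz hypothesis killing the $E_p$-components of the kernel. Your explicit induction on the power $j$ and the degree check $2(j+1)+k\le 2n-k<2n$ merely spell out the step the paper states in one line (``the map decomposes under $\Psi$ as the direct sum of the two maps''), correctly identifying why the local intersection pairings $F_p$ never enter.
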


\begin{proof}
We may suppose that we only do the resolution at one point. The
general case follows from this one. Also we may assume that $\dim
M=2n\geq 4$. By property (c) in Definition \ref{def:resolution},
$\tilde\omega$ and $\pi^*\omega$ agree on a neighbourhood of the
complement of the exceptional divisor, so denoting by
$\Psi:H^k(\tilde M)\to H^k(M)\oplus H^k(E_p)$ the isomorphism
coming from Proposition \ref{prop:little-lemma}, we have
 $$
 \Psi([\tilde\omega])=([\omega], [\tilde\omega|_{E_p}]).
 $$

By Proposition \ref{prop:local_product}, the map
 $$
 [\tilde\omega]^{n-k}: H^k(\tilde M) \into H^{2n-k}(\tilde M)\,
 $$
 decomposes under the isomorphism $\Psi$ as the direct sum of the
 two maps,
 $$
 [\omega]^{n-k}: H^k(M) \into H^{2n-k}(M)\, ,
 $$
and
 $$
 [\tilde\omega]^{n-k}:H^k(E_p)\to H^{2n-k}(E_p)\, .
 $$

If the local Lefschetz property is satisfied, the second map is an
isomorphism. The result follows.
\end{proof}

\begin{theorem} \label{thm:3.9}
The symplectic resolution $\pi:(\tilde M,\tilde \omega)\to
(M,\omega)$ constructed in Theorem \ref{theo:resolution} satisfies
the local Lefschetz property at $E_p$, for each singular point
$p\in M$.

So, $({M},\omega)$ satisfies the Lefschetz property if and only if  $(\tilde{M},\tilde\omega)$ does.
\end{theorem}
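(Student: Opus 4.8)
The plan is to establish the local Lefschetz property at each exceptional divisor $E_p$, after which the equivalence of the Lefschetz property for $(M,\omega)$ and $(\tilde M,\tilde\omega)$ follows immediately from Proposition \ref{prop:Lefs} together with the decomposition of cohomology in Proposition \ref{prop:little-lemma}: the ``if'' direction is Proposition \ref{prop:Lefs}, and the ``only if'' direction follows because the kernel of $[\omega]^{n-k}$ on $H^k(M)$ embeds into the kernel of $[\tilde\omega]^{n-k}$ on $H^k(\tilde M)$ via $\pi^*$. So the whole content is the claim that the maps $[\tilde\omega]^{n-k}:H^k(E_p)\to H^{2n-k}(E_p)$ are isomorphisms.

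First I would recall the structure of $E_p$ coming from the construction in Theorem \ref{theo:resolution}: a small neighbourhood $\tilde U_p$ of $E_p$ carries a K\"ahler structure $(\tilde\omega,I)$ by property (d), and moreover on the region $\frac13 B/G_p$ we have the explicit formula \eqref{eqn:car}, namely $\tilde\omega=(1-\epsilon)\pi^*\omega+\epsilon\,\Omega$, where $\Omega$ is the restriction of a projective K\"ahler form. Since $\pi^*\omega$ restricted to $E_p$ is zero (because $\pi(E_p)=\{p\}$ is a point), the restriction $[\tilde\omega]|_{E_p}$ equals $\epsilon\,[\Omega]|_{E_p}$, a positive multiple of an ample (hence K\"ahler) class on the smooth projective-type variety $E_p$.

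The key step is then to invoke the Hard Lefschetz theorem for the smooth compact K\"ahler manifold $E_p$: multiplication by the $(n-k)$-th power of a K\"ahler class induces an isomorphism $H^k(E_p)\to H^{2n-k}(E_p)$ — here one must check that $E_p$ has complex dimension $n$, so that $2n$ is its real dimension and the exponents match; this holds because $E_p$ is the exceptional fibre of a resolution of the $n$-dimensional variety $\CC^n/G_p$. Since $[\tilde\omega]|_{E_p}$ is a positive real multiple of a K\"ahler class, the same conclusion holds for $[\tilde\omega]^{n-k}$, which is exactly the local Lefschetz property at $E_p$ in the sense of Definition \ref{def:local-lefschetz}. (The pairing in \eqref{eqn:lefschetz} and the intersection form $F_p$ are both defined through the manifold-with-boundary $\tilde U_p$ and its deformation retract $E_p$, so no compatibility issue arises.)

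The main obstacle I anticipate is the bookkeeping needed to justify that $[\tilde\omega]|_{E_p}$ is genuinely a K\"ahler (i.e.\ positive) class on $E_p$ rather than merely the restriction of a closed $2$-form: one needs that the ambient embedding $\tilde X\subset\PP^N$ used in Theorem \ref{theo:resolution} restricts to an embedding (or at least an immersion with ample pullback) on the compact piece containing $E_p$, so that $\Omega|_{E_p}$ is positive-definite; and one needs $\epsilon>0$ small but the positivity of the restriction to $E_p$ is unaffected by $\epsilon$ since on $E_p$ the term $(1-\epsilon)\pi^*\omega$ vanishes identically. Once this is in place, Hard Lefschetz does all the remaining work and the theorem follows. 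Note finally that the proof also recovers the fact, used in the Remark after Proposition \ref{prop:local_product}, that $H^1(E_p)=0$: by Hard Lefschetz $\dim H^1(E_p)=\dim H^{2n-1}(E_p)$, and since $E_p$ deformation retracts a neighbourhood whose complement has the rational cohomology of $S^{2n-1}$, the relevant Mayer--Vietoris argument forces $H^1(E_p)=0$.
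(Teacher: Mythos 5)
Your reduction of the theorem to the local Lefschetz property, and your identification of $[\tilde\omega]|_{E_p}$ with $\epsilon[\Omega]|_{E_p}$ (the term $(1-\epsilon)\pi^*\omega$ being exact near $E_p$), both agree with the paper. But the central step of your argument fails: $E_p$ does \emph{not} have complex dimension $n$. It is the exceptional fibre over the single point $0\in\CC^n/G_p$, hence a proper subvariety of the $n$-dimensional resolution $\tilde X$; by Definition \ref{def:resolution}(b) it has real codimension at least $2$ in $\tilde M$, so $\dim_\CC E_p\leq n-1$ (for the blow-up of $\CC^n$ at the origin it is $\CP^{n-1}$), and moreover it may be a union of intersecting smooth components rather than a manifold. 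Consequently the map $[\tilde\omega]^{n-k}:H^k(E_p)\to H^{2n-k}(E_p)$ of Definition \ref{def:local-lefschetz} is \emph{not} the Hard Lefschetz map of $E_p$ — that would be $[\tilde\omega]^{m-k}:H^k(E_p)\to H^{2m-k}(E_p)$ with $2m=\dim_{\RR}E_p<2n$ — and Hard Lefschetz applied to $E_p$ itself says nothing about the shifted map you need. (This is not a cosmetic issue: for a general compact K\"ahler $Y$ of dimension $m<n$ the shifted map can fail badly, e.g.\ it forces $\dim H^k(Y)=\dim H^{2n-k}(Y)$, which is a real constraint.) Your closing claim that Hard Lefschetz "recovers" $H^1(E_p)=0$ rests on the same dimension error.

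The paper gets around this by globalizing: compactify $\CC^n/G_p$ to $\bar X=\PP^n/G_p$, resolve the singularities at infinity to get a projective variety $Z$ with a single singular point $p$, and let $\tilde Z\to Z$ be the resolution at $p$. Then $\tilde Z$ is a smooth projective $n$-fold, so genuine Hard Lefschetz gives that $[\Omega]^{n-k}:H^k(\tilde Z)\to H^{2n-k}(\tilde Z)$ is an isomorphism; Proposition \ref{prop:local_product} splits this map as the direct sum of $[\omega_Z]^{n-k}:H^k(Z)\to H^{2n-k}(Z)$ and $[\Omega]^{n-k}:H^k(E_p)\to H^{2n-k}(E_p)$, and a direct summand of an isomorphism that preserves the splitting is an isomorphism. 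Some such global argument (or another device transferring Poincar\'e--Lefschetz duality of the pair $(\tilde U_p,\partial\tilde U_p)$ to the exceptional set) is indispensable here; as written, your proof has a genuine gap at its key step.
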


\begin{proof}
Take the complex projective variety $\bar X=\PP^n/G_p$ with the
linear action of $G_p$ which extend that on $\CC^n\subset\PP^n$.
Resolve its singularities \cite{Hironaka} at the infinity to
obtain a projective variety $Z$ with a single isolated singularity
at $p$. Let $\pi:\tilde{Z}\to Z$ be the resolution of the
singularity at $p$. Then $\tilde{Z}$ is a smooth projective
variety, hence it satisfies the hard-Lefschetz property, that is,
if $\Omega$ denotes the K\"ahler form of $\tilde Z$, then
 \begin{equation}\label{eqn:lll}
 [\Omega]^{n-k}: H^k(\tilde Z)\to H^{2n-k}(\tilde Z)
 \end{equation}
is an isomorphism.

Let $U=B/G_p$ be a small neighborhood of $p\in Z$, and let
$\tilde U=\pi^{-1}(U)$. Then Proposition \ref{prop:local_product}
implies that the map (\ref{eqn:lll}) decomposes as a direct sum of
the maps $[\omega_Z]^{n-k}:H^k(Z)\to H^{2n-k}(Z)$ and
$[\Omega]^{n-k}:H^k(E_p)\to H^{2n-k}(E_p)$, where
$E_p=\pi^{-1}(p)$ and $\omega_Z$ is the K\"ahler form of $Z$. So
the map
 \begin{equation}\label{eqn:lll2}
 [\Omega]^{n-k}: H^k(E_p)\to H^{2n-k}(E_p)
 \end{equation}
is an isomorphism.

Now let $\pi:(\tilde M,\tilde\omega)\to (M,\omega)$ be a
symplectic resolution at a point $p$ with local model $B/G_p$, as
carried out in Theorem \ref{theo:resolution}. Then
  $$
  \tilde\omega|_{\tilde{V}}=(1-\epsilon)\pi^*\omega + \epsilon\,
 \Omega\,,
 $$
in a neighborhood $\tilde V=\pi^{-1}(V)$ of $E_p$. But in $V$,
$\omega=d \gamma$ for a $1$-form $\gamma$, which we can suppose
$G_p$-invariant, so $\pi^*\omega$ is exact in $V$. So, restricting
to $\tilde{V}$, $[\tilde{\omega}]=[\epsilon \, \Omega]$. As the
map (\ref{eqn:lll2}) is an isomorphism, so is the map
 $$
 [\tilde\omega]^{n-k}: H^k(E_p)\to H^{2n-k}(E_p)\, ,
 $$
completing the theorem.
\end{proof}

%%%%%%%%%%%%%%%%%%%%%%%%%%%%%%%%%%%%%%%%%%%%%%%%%%%%%%%%
\subsection{Resolutions and $a$-Massey products}
%%%%%%%%%%%%%%%%%%%%%%%%%%%%%%%%%%%%%%%%%%%%%%%%%%%%%%%%

%%In this section
We show that $a$-Massey products are also well
behaved \wrt\ symplectic resolutions.

\begin{theorem} \label{theo:resolution-pseudo}
  Let $\pi:(\tilde{M},\tilde\omega)\to (M,\omega)$ be a symplectic resolution of a symplectic
  orbifold $(M,\omega)$. If $M$ has a non-trivial $a$-product, then
  so does $\tilde{M}$.
\end{theorem}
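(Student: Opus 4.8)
The plan is to compare the orbifold de Rham complex $(\Omega_{orb}(M),d)$ with the de Rham complex $(\Omega(\tilde M),d)$ by constructing a DGA morphism $\pi^*\colon \Omega_{orb}(M)\to \Omega(\tilde M)$ that pulls back orbifold forms (including $\tilde\omega$ in the complement of a neighbourhood of $E$, using property (c) of Definition \ref{def:resolution}) and then to invoke the behaviour of $a$-Massey products under the relevant cohomology maps. The key observation I would use is Proposition \ref{prop:little-lemma}: $\pi^*\colon H^k(M)\into H^k(\tilde M)$ is injective for $k>0$ (and an isomorphism in degree $0$ on connected components), so $\pi^*$ is \emph{injective} on cohomology in every degree, even though it is not a quasi-isomorphism. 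The strategy is therefore to run the ``easy half'' of the argument in Lemma \ref{lem:quasi isomorphisms}, which only needs injectivity of $\psi$ on cohomology, not bijectivity.

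Concretely, suppose $\la a;b_1,\dots,b_n\ra$ is a non-trivial $a$-Massey product on $M$, with $a,b_i\in\Omega_{orb}(M)$ closed, $|a|$ even, and $d\xi_i=a\wedge b_i$ for orbifold forms $\xi_i$. Set $a'=\pi^*a$, $b_i'=\pi^*b_i$, $\xi_i'=\pi^*\xi_i$ in $\Omega(\tilde M)$. Since $\pi^*$ is a DGA morphism, $a',b_i'$ are closed, $|a'|=|a|$ is even, and $d\xi_i'=a'\wedge b_i'$, so the $a'$-Massey product $\la a';b_1',\dots,b_n'\ra$ is defined on $\tilde M$ and contains the class of $\pi^*c$, where $c$ is the representative \eqref{eq:c}. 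Now I claim it is non-trivial. If it were trivial, then $0\in\la a';b_1',\dots,b_n'\ra$, and by Remark \ref{rem:eta exact} (applied with $\tilde M$ in place of $\A$) every element of that set differs from $[\pi^*c]$ by something of the form described in Lemma \ref{lem:helps in the computations}; more efficiently, one uses Proposition \ref{prop:indet}: the indeterminacy of $\la a';b_1',\dots,b_n'\ra$ lies in $\sum_j \la a';b_1',\dots,\widehat{b_j'},\dots,b_n'\ra\wedge H(\tilde M)$. The point I would then exploit is that each lower $a'$-product $\la a';b_1',\dots,\widehat{b_j'},\dots,b_n'\ra$ equals $\pi^*$ of the corresponding lower $a$-product on $M$, up to the ambiguity already measured, so the whole set $\la a';b_1',\dots,b_n'\ra$ is contained in $\pi^*\bigl(H(M)\bigr)$ translated by $\pi^*$ of the indeterminacy on $M$ — hence $\la a';b_1',\dots,b_n'\ra\subseteq \pi^*\la a;b_1,\dots,b_n\ra$ (as subsets of $H(\tilde M)$, after identifying $H(M)$ with its image). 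Since $\pi^*$ is injective on cohomology and $0\notin\la a;b_1,\dots,b_n\ra$, we get $0\notin\la a';b_1',\dots,b_n'\ra$, a contradiction.

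A cleaner way to package the last step, and the one I would actually write, is to prove the set-theoretic identity $\pi^*\bigl(\la a;b_1,\dots,b_n\ra\bigr)=\la a';b_1',\dots,b_n'\ra$, by imitating verbatim the proof of Lemma \ref{lem:quasi isomorphisms} but replacing the use of surjectivity of $\psi^*$ on cohomology with the single fact that every closed form $\xi$ on $\tilde M$ that restricts to an exact form on $\tilde M\setminus E$ (equivalently, whose class lies in the kernel of $i^*$ from Proposition \ref{prop:little-lemma} after the relevant normalizations) is cohomologous to $\pi^*$ of a closed orbifold form — which follows because $\pi^*\colon H(M)\to H(\tilde M)$ is injective with image a direct summand, so we may adjust the choices of $\xi_i'$ by closed forms to make them $\pi^*$ of orbifold forms modulo exact forms, exactly as in that proof. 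Granting that, $\pi^*$ carries the non-trivial $a$-product on $M$ to a set of classes on $\tilde M$ none of which is zero, since $\pi^*$ is injective.

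The main obstacle is this normalization step: unlike the quasi-isomorphism case, not every closed form on $\tilde M$ comes from $M$, so one cannot simply choose $\xi_i'=\pi^*\xi_i$ and be done — one must argue that any \emph{other} admissible choice $\tilde\xi_i$ on $\tilde M$, which a priori involves the ``exceptional'' cohomology $H^*(E_p)$, still yields a representative cohomologous to $\pi^*$ of something in the $a$-product on $M$. This requires combining Proposition \ref{prop:little-lemma} (the split exact sequence, so $H^*(\tilde M)=\pi^*H^*(M)\oplus\bigoplus_p H^*(E_p)$) with Lemma \ref{lem:helps in the computations} and Remark \ref{rem:eta exact} to track how varying $\xi_i'$ by a closed form with a non-trivial $H^*(E_p)$-component affects the representative; since $a'=\pi^*a$ restricts to zero on each $E_p$ (because $\omega$ is exact near each singular point, as noted in the proof of Theorem \ref{thm:3.9}), the products $a'\wedge(\text{class supported near }E_p)$ behave well, and one checks the correction terms still land in the image of $\pi^*$. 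Once that bookkeeping is done, injectivity of $\pi^*$ on cohomology closes the argument.
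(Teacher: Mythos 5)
Your overall architecture is the same as the paper's -- use the injectivity of $\pi^*$ on cohomology from Proposition \ref{prop:little-lemma} and prove the set identity $\la \pi^*a;\pi^*b_1,\ldots,\pi^*b_n\ra=\pi^*\la a;b_1,\ldots,b_n\ra$ -- but there is a genuine gap at precisely the step you defer as ``bookkeeping'', and the reason you offer for why it closes is not the operative one. First, $\pi^*$ is not defined on all of $\Omega_{orb}(M)$: Definition \ref{def:resolution} only makes $\pi$ a diffeomorphism off $E$, so the pullback of a general orbifold form is a priori only a form on $\tilde M\setminus E$. Second, and more seriously, an admissible $\xi_i'$ on $\tilde M$ differs from $\pi^*\xi_i$ by a closed form whose class has, by Proposition \ref{prop:little-lemma}, a component in $\bigoplus_p H(E_p)$; the correction term produced by Lemma \ref{lem:helps in the computations} is $(\text{lower-product representative})\wedge\eta_j$, i.e.\ it wedges the $b_i$'s and the remaining $\xi$'s against $\eta_j$ -- the class $a$ does not appear in it. So the fact that $\pi^*a$ restricts to an exact form near $E_p$ does not control these terms. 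What you would actually need is that the lower-product representatives have vanishing $H(E_p)$-component at every stage of the correction, and once the $\xi_i'$ have themselves been modified by $E_p$-classes this is exactly what is no longer automatic (by Proposition \ref{prop:local_product} the $E_p$-components multiply among themselves nontrivially).

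The paper resolves both difficulties with one preliminary move that your proposal is missing: before pulling anything back, replace $(\Omega_{orb}(M),d)$ by the quasi-isomorphic subalgebra $\A$ of forms vanishing near $\Delta$ (as in the proof of Proposition \ref{prop:model-orbifold}); by Lemma \ref{lem:quasi isomorphisms} the nontrivial $a$-product survives in $\A$ with representatives $a$, $b_i$, $\xi_i$ all supported away from the singular points. Then $\pi^*:\A\to\Omega(\tilde M)$ is an honest DGA morphism (extend by zero over $E$), and -- crucially -- one may choose the neighbourhood $\tilde U_p$ of $E_p$ disjoint from the supports of the $b_i$. Writing $\xi_i'-\pi^*\xi_i=\pi^*\zeta_i+\eta_i+dz_i$ with $\eta_i$ supported in $\tilde U_p$, every cross term $\eta_i\wedge\pi^*b_j$ then vanishes identically as a form, and the reverse inclusion follows with no cohomological bookkeeping at all. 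With this reduction your argument goes through; without it, the crucial normalization step remains unproved.
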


\begin{proof}
As before, we can assume, without loss of generality, that there
is only one singular point $p$.

Let $\A\subset \Omega_{orb}(M)$ be the algebra  of  smooth forms
which are constant (for degree $0$) and zero (for degree $>0$) in
a neighborhood of the  critical point $p$. Then the map
 $$
 \A \inc \Omega_{orb}(M)
 $$
is a quasi-isomorphism. According to  Lemma \ref{lem:quasi
isomorphisms}, there is a non-zero $a$-product
$\IP{a,b_1,\ldots,b_m}$ on $\A$. The inclusion
 $$
 \pi^*: \A \to \Omega(\tilde{M})\,
 $$
is a map of DGAs which induces the injection $\pi^*:H^k(M)\to
H^k(\tilde{M})$ for $k>0$ (it also induces an injection for
$k=0$). To prove our result we will show that
 $$
 \pi^*\la a;b_1,\ldots,b_m \ra\, = \la \pi^*(a);\pi^*(b_1),\ldots,\pi^*(b_m) \ra.
 $$
The inclusion
 $$
 \pi^*\la a;b_1,\ldots,b_m \ra\, \subset \la \pi^*(a);\pi^*(b_1),\ldots,\pi^*(b_m) \ra.
 $$
is obvious. So we only have to prove the converse.

Let $[c']=[\sum\overline{\xi_1'}\wedge \ldots \wedge
\pi^*(b_i)\wedge \ldots \wedge \xi_m'] \in \la
\pi^*(a);\pi^*(b_1),\ldots ,\pi^*(b_m) \ra$, where $\pi^*(a)\wedge
\pi^*(b_i)=d\xi_i'$. And let $\xi_i \in \A$ be such that $d\xi_i =
a \wedge b_i$. Then $d(\xi_i'-\pi^*(\xi_i))=0$, so
$\xi_i'-\pi^*(\xi_i)$ represents a cohomology class.

%The degree of $\xi_i$ is $|\xi_i|<2n-1$, since otherwise the
%degree of the $a$-Massey product would exceed the dimension of the
%manifold.

Using Proposition \ref{prop:little-lemma}, we may decompose
$[\xi_i'-\pi^*(\xi_i)] =\pi^*s_{i,1} + s_{i,p}$, where $s_{i,1}\in
H^k(M)$ and $s_{i,p} \in H(\tilde{U}_p,\bd\tilde{U}_p) \subset
H(\tilde M)$. Here we choose $U_p$ to be disjoint of the support
of $b_i$ for all $i$. We represent $s_{i,1}$ by a form $\zeta_i\in
\A\subset \Omega_{orb}(M)$ and $s_{i,p}$ by a form $\eta_{i}\in
\Omega(\tilde{U}_p,\bd\tilde{U}_p)$ (which can be thought of as a
form on $\tilde M$ supported inside $\tilde{U}_p$). So we can
write
 \begin{equation}\label{eq:xi, xi' zeta}
 \xi_i'-\pi^*(\xi_i)= \pi^*(\zeta_{i}) + \eta_{i} + dz_i,
 \end{equation}
where $z_i \in \Omega(\tilde{M})$. Therefore, $d(\xi_i +\zeta_i) =
a\wedge b_i$ and
 $$
 [c] = [\sum (\overline{\xi_1 +\zeta_1})\wedge \ldots \wedge b_i \wedge
 \ldots \wedge (\xi_m + \zeta_m)]  \in \IP{a;b_1,\ldots,b_m}
 $$
is such that
 \begin{align*}
 \pi^*[c]& = [\sum \overline{\pi^*(\xi_1 +\zeta_1)}\wedge \ldots
 \wedge \pi^*b_i \wedge \ldots \wedge \pi^*(\xi_m + \zeta_m)] \\
 &=[\sum (\overline{\xi_1' -\eta_{1})}\wedge \ldots \wedge
 \pi^*b_i \wedge \ldots \wedge (\xi_m' - \eta_m)]\\
 &=[\sum \overline{\xi_1'}\wedge \ldots \wedge \pi^*b_i
 \wedge \ldots \wedge \xi_m' ] \\
 &=[c'],
\end{align*}
where in the second equality we have used \eqref{eq:xi, xi' zeta},
Lemma \ref{lem:helps in the computations}  and Remark \ref{rem:eta
exact}, and in the third equality we used that $\eta_{i} \wedge
\pi^*b_j = 0$ since these forms have disjoint supports.  This shows
the reverse inclusion and finishes the theorem.
\end{proof}

%%%%%%%%%%%%%%%%%%%%%%%%%%%%%%%%%%%%%%%%%%%%%%%%%%%%%%%%%
\section{Symplectic blow-up}\label{sec:blow-up}
%%%%%%%%%%%%%%%%%%%%%%%%%%%%%%%%%%%%%%%%%%%%%%%%%%%%%%%%%

In this section we recall results about the behaviour of the
Lefschetz property under ordinary symplectic blow-up, as
introduced by McDuff \cite{McDuff}, and we study the behaviour
of $a$-products under this construction.

In what follows, we let $M^{2n}$ be a symplectic manifold/orbifold
and $N^{2(n-k)} \subset M$ be a symplectic submanifold which does
not intersect the orbifold singularities. We let $\pi:\tilde{M}
\into M$ be the symplectic blow-up of $M$ along $N$. Then the cohomology of
$\tilde{M}$ is given by
 $$
 H^i(\tilde{M}) = H^i(M)\oplus H^{i-2}(N)\sigma +
 H^{i-4}(N)[\sigma]^2 + \ldots + H^{i-2k+2}(N) [\sigma]^{k-1},
 $$
where $\sigma$ is a closed 2-form such that $\sigma^{k-1}$ has
nonzero integral over the $\CC P^{k-1}$ fibers of the exceptional
divisor. The multiplication rules are the obvious ones using the
restriction of elements on $H^i(M)$ to $H^i(N)$ together with the
extra relation
 $$
 [\sigma]^k = -PD(N)- c_{k-1}[\sigma] - \ldots - c_1[\sigma]^{k-1},
 $$
where $c_i$ are the Chern classes of the normal bundle of $N$, and
$PD(N)$ is the Poincar\'e dual of $N$.

Similarly to the case of symplectic resolutions the summand $H(M)
\hookrightarrow H(\tilde{M})$ is given by the image of the pull
back $\pi^*:H(M) \into H(\tilde{M})$. However, unlike the case of
resolutions, in general one can not choose representatives for the
cohomology classes in $H(M) \hookrightarrow H(\tilde{M})$ with
support away from the exceptional set.

%%%%%%%%%%%%%%%%%%%%%%%%%%%%%%%%%%%%%%%%%%%%%%%%%%%%%%%%%
\subsection{The Lefschetz property}
%%%%%%%%%%%%%%%%%%%%%%%%%%%%%%%%%%%%%%%%%%%%%%%%%%%%%%%%%

There is  a contrast between the behaviour of the maps
$[\omega]^{n-k}:H^k(M) \into H^{2n-k}(M)$ under resolution of
singularities and under ordinary symplectic blow-up. While we have
proved that in the former case these maps have the same kernel,
the same is not true for the latter. Indeed, in \cite{Cav2}, the
first author proved that one can reduce the dimension of the
kernel of the map $[\omega]^{n-k}$ by blowing-up along specific
submanifolds. The result from \cite{Cav2} adapted to the case we
study is the following:

\begin{theorem}[\cite{Cav2}]\label{theo:blow up and Lefschetz}
Given a symplectic orbifold $(M^{2n},\omega)$ and a symplectic
surface $\Sigma^2 \subset M$ disjoint from the singular set, let
$\pi:\tilde{M}\into M$  be the symplectic blow up of $M$ along
$\Sigma$. Then there is a symplectic form $\tilde{\omega}$ on
$\tilde{M}$ such that in $H^2(\tilde{M})$
 $$
 \ker([\tilde\omega]^{n-2}\cup) = \pi^*(\ker([\omega^{n-2}]\cup ) \cap \ker([\Sigma]\cap)),
 $$
while in $H^k(\tilde{M})$, for $k > 2$,
 $$
 \ker([\tilde\omega]^{n-k}\cup) =  \pi^*(\ker([\omega^{n-k}]\cup ).
 $$
\end{theorem}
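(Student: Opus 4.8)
The plan is to compute $[\tilde\omega]^{n-k}\cup$ directly from the explicit description of $H^*(\tilde M)$ recalled above. Here $N=\Sigma$ has real codimension $2(n-1)$, so the codimension parameter is $n-1$, the exceptional set $E=\pi^{-1}(\Sigma)$ is a $\CC P^{n-2}$-bundle $\varpi\colon E\to\Sigma$, and
$$H^i(\tilde M)=\pi^*H^i(M)\ \oplus\ \bigoplus_{j=1}^{n-2}H^{i-2j}(\Sigma)\,[\sigma]^j,$$
with $\pi^*$ a ring homomorphism, the product of $\pi^*a$ against the $[\sigma]$-part controlled by the restriction $i_\Sigma^*\colon H^*(M)\to H^*(\Sigma)$, and a single relation $[\sigma]^{n-1}=-\PD(\Sigma)-c_1(\nu_\Sigma)\,[\sigma]^{n-2}$ (the higher Chern classes of the normal bundle $\nu_\Sigma$ vanish because $\Sigma$ is a surface). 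By McDuff's theorem I may take $\tilde\omega$ with $[\tilde\omega]=\pi^*[\omega]+\lambda[\sigma]$ for $\lambda>0$ small (after a sign normalization of $[\sigma]$), with freedom to shrink $\lambda$. Everything hinges on two facts coming from $\dim\Sigma=2$: $H^j(\Sigma)=0$ for $j\ge3$, and $([\omega]|_\Sigma)^2=0$. In particular $H^2(\tilde M)=\pi^*H^2(M)\oplus\RR[\sigma]$, $H^{2n-2}(\tilde M)=\pi^*H^{2n-2}(M)\oplus\RR[\sigma]^{n-2}$, and on $H^2(M)$ the operator $[\Sigma]\cap$ is the functional $\beta\mapsto\langle\beta,[\Sigma]\rangle$.

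The common computation is to expand $[\tilde\omega]^{n-k}=\sum_{m=0}^{n-k}\binom{n-k}{m}\lambda^m(\pi^*[\omega])^{n-k-m}[\sigma]^m$, apply it to a general class $\pi^*\beta+\sum_j\gamma_j[\sigma]^j$, multiply out by the rules above, reduce any power $[\sigma]^m$ with $m\ge n-1$ via the relation, and read off the components in $H^{2n-k}(\tilde M)$. For $k>2$ I expect this to be routine: since $\beta\in H^k(M)$ has $i_\Sigma^*\beta=0$ (as $H^k(\Sigma)=0$), all cross terms between $\pi^*\beta$ and positive powers of $[\sigma]$ vanish, whence $[\tilde\omega]^{n-k}\pi^*\beta=\pi^*([\omega]^{n-k}\beta)$ with no $[\sigma]$-part; and on the $[\sigma]$-part the top term $\lambda^{n-k}\gamma_j[\sigma]^{n-k+j}$ always survives (the exponent never reaches $n-1$, so no reduction) while every other term carries a positive power of $[\omega]|_\Sigma$ and so either vanishes or lies in a strictly higher power of $[\sigma]$, making $[\tilde\omega]^{n-k}$ block-triangular on the $[\sigma]$-part with invertible scalar diagonal $\lambda^{n-k}$, hence injective. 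This gives $\ker([\tilde\omega]^{n-k}\cup)=\pi^*\ker([\omega]^{n-k}\cup)$ for $k>2$, mirroring the resolution case (Proposition \ref{prop:Lefs}).

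The case $k=2$ is the crux. Running the expansion on $x=\pi^*\beta+t[\sigma]$ and using $([\omega]|_\Sigma)^2=0$ to discard all but two monomials, I expect $x\in\ker([\tilde\omega]^{n-2})$ to be equivalent to two equations: from the $[\sigma]^{n-2}$-component,
$$\lambda\,\langle\beta,[\Sigma]\rangle+t\bigl((n-2)\!\int_\Sigma\omega-\lambda\,\langle c_1(\nu_\Sigma),[\Sigma]\rangle\bigr)=0,$$
and from the $\pi^*H^{2n-2}(M)$-component, $[\omega]^{n-2}\beta=t\lambda^{n-2}\PD(\Sigma)$ in $H^{2n-2}(M)$ (the $\PD(\Sigma)$ appearing through the relation applied to the single monomial $t\lambda^{n-2}[\sigma]^{n-1}$). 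To finish, suppose $t\ne0$; then the second equation exhibits $\PD(\Sigma)=[\omega]^{n-2}\beta_0$ with $\beta_0=(t\lambda^{n-2})^{-1}\beta$, and the elementary fact that $\PD(\Sigma)\in\mathrm{Im}([\omega]^{n-2})$ forces $\ker([\omega]^{n-2})\subseteq\ker([\Sigma]\cap)$ — for if $[\omega]^{n-2}\gamma=0$ then $\langle\gamma,[\Sigma]\rangle=\int_M\gamma\cup[\omega]^{n-2}\beta_0=\int_M([\omega]^{n-2}\gamma)\cup\beta_0=0$ — applied to $\gamma=\beta-t\lambda^{n-2}\beta_0$ (which lies in $\ker([\omega]^{n-2})$ by the second equation) yields $\langle\beta,[\Sigma]\rangle=t\lambda^{n-2}\langle\beta_0,[\Sigma]\rangle$; substituting this into the first equation turns its $t$-coefficient into $(n-2)\!\int_\Sigma\omega+O(\lambda)$, nonzero for $\lambda$ small, so $t=0$, contradicting $t\ne0$. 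Hence $t=0$; the first equation then gives $\langle\beta,[\Sigma]\rangle=0$ and the second gives $[\omega]^{n-2}\beta=0$, i.e.\ $\beta\in\ker([\omega]^{n-2})\cap\ker([\Sigma]\cap)$, while conversely any such $\beta$ gives $\pi^*\beta\in\ker([\tilde\omega]^{n-2})$ with $t=0$. This is the degree-$2$ formula.

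The step I expect to be most laborious — and where \cite{Cav2} does the real work — is the ring-theoretic bookkeeping behind the last two paragraphs: fixing the sign and normalization of $[\sigma]$, writing the products $\pi^*a\cdot[\sigma]^j$ explicitly via the Gysin map of $E\hookrightarrow\tilde M$ and fibre integration over $\CC P^{n-2}$, and carrying these through the binomial expansion so that the two $k=2$ equations (in particular the coefficients of $t$ and of $\PD(\Sigma)$) and the block-triangularity for $k>2$ come out exactly as stated. Two lighter points: Poincar\'e duality with real coefficients is invoked on $\tilde M$, legitimate because $\Sigma$ is disjoint from the singular set $\Delta$, so the blow-up is an honest manifold surgery in the smooth locus, the noncompact case being handled by the inverse-limit device of Proposition \ref{prop:little-lemma}; and McDuff's theorem is quoted for the existence of a symplectic $\tilde\omega$ realizing $\pi^*[\omega]+\lambda[\sigma]$ for small $\lambda$.
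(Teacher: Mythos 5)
First, a point of reference: the paper does not prove this theorem at all --- it is quoted verbatim from \cite{Cav2} --- so there is no in-paper proof to compare against. Your strategy (compute $[\tilde\omega]^{n-k}\cup$ directly in the blow-up cohomology ring, using $H^{\geq 3}(\Sigma)=0$ and $([\omega]|_\Sigma)^2=0$, with $[\tilde\omega]=\pi^*[\omega]+\lambda[\sigma]$ for $\lambda$ small) is the natural one and is essentially the computation carried out in \cite{Cav2}. Your $k>2$ argument is correct, up to one directional slip: the off-diagonal terms, which carry a positive power of $[\omega]|_\Sigma$ and hence a smaller $m$, land in \emph{strictly lower} powers of $[\sigma]$, not higher; the map on the $\sigma$-part is still triangular with diagonal $\lambda^{n-k}\,\mathrm{id}$, so the conclusion stands. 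Your two $k=2$ equations are also correct (up to an overall factor of $\lambda^{n-3}$, which you have silently divided out and which does not matter).

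The one genuine flaw is in the final step of the $k=2$ case. You set $\beta_0=(t\lambda^{n-2})^{-1}\beta$, so that $\gamma=\beta-t\lambda^{n-2}\beta_0$ is identically zero and the identity $\langle\beta,[\Sigma]\rangle=t\lambda^{n-2}\langle\beta_0,[\Sigma]\rangle$ is a tautology carrying no information; worse, $\langle\beta_0,[\Sigma]\rangle$ then depends on $\lambda$ and on the unknown $\beta$, so the subsequent claim that the $t$-coefficient is $(n-2)\int_\Sigma\omega+O(\lambda)$ is unjustified as written. The repair is easy and should be made explicit: if $\PD(\Sigma)\notin\mathrm{Im}([\omega]^{n-2}\cup)$, the second equation already forces $t=0$; otherwise fix \emph{once and for all} a class $\beta_0'$ with $[\omega]^{n-2}\beta_0'=\PD(\Sigma)$, independent of $\lambda$, $t$ and $\beta$. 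Then $\gamma=\beta-t\lambda^{n-2}\beta_0'$ genuinely lies in $\ker([\omega]^{n-2}\cup)$, your ``elementary fact'' gives $\langle\gamma,[\Sigma]\rangle=0$, and the first equation becomes $t\bigl((n-2)\int_\Sigma\omega-\lambda\langle c_1(\nu_\Sigma),[\Sigma]\rangle+\lambda^{n-1}\langle\beta_0',[\Sigma]\rangle\bigr)=0$ with the bracket now a fixed function of $\lambda$ alone, nonzero for $\lambda$ small since $\Sigma$ is symplectic. This yields $t=0$ uniformly in $(\beta,t)$, and the rest of your argument then closes correctly. Two further small points: the relevant justification for Poincar\'e duality and the blow-up ring formula is that $\tilde M$ is a compact \emph{orbifold} (a rational homology manifold), not that the surgery happens in the smooth locus; and the case $n=2$ is degenerate (no $[\sigma]$ summand, both sides trivially equal), so the argument should be stated for $n\geq 3$.
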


%%%%%%%%%%%%%%%%%%%%%%%%%%%%%%%%%%%%%%%%%%%%%%%%%%%%%%%%%
\subsection{Symplectic blow-up and  $a$-Massey products}
%%%%%%%%%%%%%%%%%%%%%%%%%%%%%%%%%%%%%%%%%%%%%%%%%%%%%%%%%

Similarly, $a$-Massey products also behave differently under
symplectic blow-up. We focus our attention on the triple $a$-product.

\begin{theorem}\label{theo:blow up and G-products}
Let $(M^{2n},\omega)$ be a symplectic orbifold, $N^{2(n-k)}
\hookrightarrow M$ be a symplectic submanifold disjoint from the
orbifold singularities and $\tilde{M}$ the symplectic blow-up of
$M$ along $N$. Then
 \begin{enumerate}
 \item if $M$ has a nontrivial triple $a$-product, say, $\langle
 a;b_1,b_2,b_3\rangle$, and $|a|+ \frac{1}{2}(|b_i|+|b_j|) \leq
 k+1$, for all $i,j$, then so does $\tilde{M}$,
 \item if $H^{odd}(N)=\{0\}$, $k> 5$  and $N$ has a nontrivial triple $a$-product,
 then so does $\tilde{M}$.
 \end{enumerate}
\end{theorem}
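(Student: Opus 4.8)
The plan is to work with the explicit cohomological model for the symplectic blow-up $\tilde M$ given above, namely $H^i(\tilde M) = \pi^* H^i(M) \oplus \bigoplus_{j=1}^{k-1} H^{i-2j}(N)[\sigma]^j$, and to transport representatives of the forms $a$, $b_i$, $\xi_i$ realizing the given $a$-Massey product from $M$ (resp.\ $N$) up to $\tilde M$, controlling all the correction terms that appear. The essential point in both cases is that pulling back via $\pi^*$ does not quite give a DGA map onto a nice subalgebra (unlike the resolution case treated in Theorem \ref{theo:resolution-pseudo}), because one cannot choose representatives of $\pi^*H(M)$ supported away from the exceptional divisor; so I must show that the ambiguity introduced lands in the indeterminacy of the $a$-product, using Proposition \ref{prop:indet} and Lemma \ref{lem:helps in the computations}.

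For part (1): choose closed forms representing $[a], [b_1], [b_2], [b_3]$ on $M$ together with primitives $\xi_i$ with $d\xi_i = a\wedge b_i$, and let $c = b_1\wedge\xi_2\wedge\xi_3 + c.p.$ be the corresponding representative of the nontrivial class in $\langle a;b_1,b_2,b_3\rangle$. Pull everything back by $\pi^*$; then $\pi^*c$ represents an element of $\langle \pi^*a;\pi^*b_1,\pi^*b_2,\pi^*b_3\rangle$ on $\tilde M$. I must argue this is nonzero in the quotient of $H^{|c|}(\tilde M)$ by the indeterminacy, which by Proposition \ref{prop:indet} is contained in $\langle \pi^*b_1,\pi^*a,\pi^*b_2\rangle\wedge H(\tilde M) + \text{c.p.}$, i.e.\ in $[\pi^*b_1]\wedge H + \cdots$ plus genuine Massey-product ambiguity. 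The degree hypothesis $|a| + \tfrac12(|b_i|+|b_j|) \le k+1$ is exactly what forces the degree $|\xi_i\wedge\xi_j|$ of the relevant wedge to be small enough that, under the decomposition of $H^*(\tilde M)$, the class $\pi^*c$ has nonzero component in the $\pi^*H(M)$ summand (or in a low power of $[\sigma]$ where $[\Sigma]$-type relations do not yet kick in); since $H(M)\hookrightarrow H(\tilde M)$ is a split injection of rings onto a direct summand compatible with the product in low total degree, a nonzero class downstairs stays nonzero and outside the indeterminacy upstairs. I would make this precise by projecting onto the $\pi^*H(M)$-summand and checking this projection is a ring map in the relevant degree range, so it carries the whole $a$-Massey computation of $M$ to that of $\tilde M$ faithfully.

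For part (2): now the nontrivial triple $a$-product lives on $N$, so I use the exceptional divisor $\tilde N = \PP(\nu)$ (projectivized normal bundle) with its fibration $q:\tilde N \to N$ and the class $[\sigma]|_{\tilde N}$; the ring $H^*(\tilde N)$ is a free $H^*(N)$-module on $1,[\sigma],\dots,[\sigma]^{k-1}$, and pulling back the forms $a, b_i, \xi_i$ from $N$ produces an $a$-Massey product on $\tilde N$, hence (after multiplying the final class by a suitable power of $[\sigma]$ to push it into the range where $H^*(\tilde N)\hookrightarrow H^*(\tilde M)$ and the relation $[\sigma]^k = -PD(N) - \cdots$ is harmless) an element of $\langle q^*a;q^*b_1,q^*b_2,q^*b_3\rangle$ on $\tilde M$. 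The hypotheses $H^{odd}(N)=0$ and $k>5$ are used to guarantee: first, that there is no odd-degree contribution to the indeterminacy and in particular that ordinary Massey products $\langle b_i, a, b_j\rangle$ on $N$ (which feed the indeterminacy via Proposition \ref{prop:indet}) are controlled; second, that $k$ is large enough that the powers of $[\sigma]$ needed to realize $c$ and to bound the indeterminacy all stay strictly below $[\sigma]^k$, so the blow-up relation never identifies $\pi^*c$ with something in $\pi^*H(M)\wedge(\cdots)$ or with zero. Again I would finish by exhibiting a projection from the appropriate graded piece of $H^*(\tilde M)$ onto an $H^*(N)[\sigma]^j$-summand that is a ring map on the relevant classes, reducing non-triviality on $\tilde M$ to non-triviality of $\langle a;b_1,b_2,b_3\rangle$ on $N$.

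The main obstacle I anticipate is bookkeeping the indeterminacy: in both parts one must check not merely that $\pi^*c$ (or its $[\sigma]$-multiple) is a nonzero cohomology class on $\tilde M$, but that it is nonzero \emph{modulo} the indeterminacy subspace of Proposition \ref{prop:indet}, which on $\tilde M$ is potentially much larger than on $M$ or $N$ because it involves the full ring $H^*(\tilde M)$ and the new relations coming from $PD(N)$ and the Chern classes $c_i$ of the normal bundle. Pinning down exactly which degree/power-of-$[\sigma]$ inequalities ($|a|+\tfrac12(|b_i|+|b_j|)\le k+1$ in (1); $k>5$ in (2)) are forced, and verifying that under those inequalities the relevant summand projection really is multiplicative on the classes at hand, is the heart of the argument; the rest is a transcription of the $a$-Massey calculus (Lemmas \ref{lem:cohomology a b}, \ref{lem:helps in the computations}, Remark \ref{rem:eta exact}) already developed for quasi-isomorphisms in Lemma \ref{lem:quasi isomorphisms}.
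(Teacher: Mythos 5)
Your treatment of part (1) follows the paper's own argument: fix the pulled-back primitives $\pi^*\xi_i$, observe via Lemma \ref{lem:helps in the computations} and Remark \ref{rem:eta exact} that the generic element of the product on $\tilde M$ is obtained by adding arbitrary closed forms $\eta_i=\sum_j \eta_{ij}\sigma^j$ to them, and use the degree bound to see that no power $[\sigma]^k$ (hence no feedback through the relation $[\sigma]^k=-PD(N)-c_{k-1}[\sigma]-\cdots$) can occur, so the $\pi^*H(M)$-component of every element of $\langle \pi^*a;\pi^*b_1,\pi^*b_2,\pi^*b_3\rangle$ is an element of $\langle a;b_1,b_2,b_3\rangle$ and is therefore nonzero. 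That is exactly the paper's proof; your ``projection onto $\pi^*H(M)$ is multiplicative in the relevant degrees'' is the same observation.

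Part (2), however, has a genuine gap in the construction of the product on $\tilde M$. The classes $q^*a$, $q^*b_i$ live on the exceptional divisor $\tilde N=\PP(\nu)$, not on $\tilde M$: the decomposition of $H^*(\tilde M)$ contains the summands $H^{*-2j}(N)[\sigma]^j$ only for $j\geq 1$, and there is no ring inclusion $H^*(\tilde N)\hookrightarrow H^*(\tilde M)$, so the set $\langle q^*a;q^*b_1,q^*b_2,q^*b_3\rangle$ is not a Massey-type product \emph{on} $\tilde M$. Moreover, ``multiplying the final class by a suitable power of $[\sigma]$'' is not an operation that produces elements of an $a$-Massey product: the product on $\tilde M$ must be formed from actual closed classes on $\tilde M$ together with primitives on $\tilde M$. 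The paper's fix is to twist the \emph{inputs} rather than the output: take $a\wedge\sigma$ and $b_i\wedge\sigma$, which are genuine classes on $\tilde M$, note that $(a\wedge\sigma)\wedge(b_i\wedge\sigma)=d(\xi_i\wedge\sigma^2)$, and obtain the representative $\IP{a;b_1,b_2,b_3}[\sigma]^5\in H(N)[\sigma]^5$ --- this is where the exponent $5$, and hence the hypothesis $k>5$, actually comes from. Your reading of the role of $H^{odd}(N)=\{0\}$ is then essentially correct: by Proposition \ref{prop:indet} the indeterminacy is built from odd-degree triple Massey products wedged with odd-degree classes, all of which lie in $\pi^*H(M)$ because $H^{odd}(\tilde M)=H^{odd}(M)$, while the exhibited representative has a nonzero component in the complementary summand $H(N)[\sigma]^5$. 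Without the correct choice of input classes, though, your argument for (2) cannot be completed as written.
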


\begin{proof}

We start with the proof of the first claim. Let $\langle a;
b_1, b_2, b_3 \rangle$ be a nontrivial $a$-product in
$M$. This means that $a \wedge b_i$ is exact  and
 $$
 0 \not \in \{[b_1\wedge \xi_2 \wedge \xi_3+ \overline{\xi_1}
 \wedge b_2 \wedge \xi_3 + \overline{\xi_1}\wedge \overline{\xi_2}\wedge
 b_3]\, |\, d\xi_i = a \wedge b_i \}.
 $$
Since the form $a \wedge b_i$ is exact  in $M$, $\pi^*a \wedge
\pi^*b_i $ is exact in $\tilde M$, hence the $a$-product is
defined on $\tilde{M}$. According to Lemma \ref{lem:helps in the
computations} and Remark \ref{rem:eta exact}, once we fix the
$\pi^*\xi_i$, the $a$-product is obtained by adding closed forms
to $\pi^*\xi_i$ and only depends on the cohomology class of the
closed forms added. In particular, we can assume that these closed
forms are of the standard form $\eta_i = \sum \eta_{ij} \sigma^j$,
so that the generic element of the product is given by
 $$
[\pi^*b_1\wedge (\pi^*\xi_2 + \eta_2) \wedge (\pi^*\xi_3+\eta_3)
 + (\overline{\pi^*\xi_1+\eta_1})\wedge \pi^*b_2 \wedge (\pi^*\xi_3
 + \eta_3) + (\overline{\pi^* \xi_1+ \eta_1})\wedge (\overline{\pi^*
 \xi_2 + \eta_2})\wedge \pi^*b_3].
 $$
Due to the hypothesis about $k$ and $|a|$, $|b_i|$, we see that
the component of the above class lying in $H(M) \subset
H(\tilde{M})$ is precisely  the original $a$-product as there are
no powers of $\sigma$ higher than $k-1$ appearing when the product
is computed. Since the original product was nontrivial, so is the
product induced  on $H(\tilde{M})$.

To prove the second claim, we let $\IP{a;b_1,b_2,b_3}$ be a
nontrivial $a$-product on $N$. This can only be the case if $a$
and $b_i$ are even degree forms, due to Lemma \ref{lem:cohomology
a b}, as $H^{odd}(N) = \{0\}$. Further, $H^{odd}(N) = \{0\}$
together with Proposition \ref{prop:indet} implies that
$\IP{a;b_1,b_2,b_3}$ has no indeterminacy and hence is a single
cohomology class. Now consider the closed forms $a\wedge \sigma,
b_i \wedge \sigma \in H^{even}(\tilde{M})$. The relations $a
\wedge b_i = d\xi_i$ imply that
 $$
 a\wedge \sigma \wedge b_i \wedge \sigma = d \xi_i \wedge \sigma^2,
 $$
and hence
 \begin{equation}\label{eq:representative}
 0\neq \IP{a;b_1,b_2,b_3}[\sigma]^5 \in \IP{a\wedge\sigma;b_1\wedge
 \sigma ,b_2 \wedge\sigma,b_3\wedge\sigma} \cap H(N)[\sigma]^5.
 \end{equation}
According to Proposition \ref{prop:indet}, the indeterminacy of
this product is a subset of
 $$
 \IP{b_1\wedge \sigma,a\wedge \sigma,b_2\wedge \sigma}
 H^{|b_3|-1}(\tilde{M})+  \IP{b_2\wedge \sigma,a\wedge
 \sigma,b_3\wedge \sigma} H^{|b_1|-1}(\tilde{M})+\IP{b_3\wedge
 \sigma,a\wedge \sigma,b_1\wedge \sigma}H^{|b_2|-1}(\tilde{M}).
 $$
Since $H^{odd}(N) = \{0\}$, all the triple Massey products above
lie in $H(M)  \subset H(\tilde{M})$ and also
$H^{|b_i|-1}(\tilde{M}) = H^{|b_i|-1}(M)$, so the indeterminacy of
the $(a\wedge\sigma)$-product is a subset of $H(M)$, but the
representative \eqref{eq:representative} does not belong to this
set, hence the product does not vanish.

%The proof of the second claim is similar to one of the first.
%Given a nontrivial $a$-Massey product on $N$, $\langle
%a;b_1,b_2,b_3\rangle$, we can consider the $a$-Massey product
%$\langle a\sigma;b_1\sigma,b_2\sigma,b_3\sigma\rangle$ on
%$\tilde{M}$. As before, the assumption about $k$ and the degrees
%of $a$ and $b_i$ implies that the coefficient of $[\sigma^5]$ is
%precisely the $a$-Massey product $\langle a;b_1,b_2,b_3\rangle \in
%H(N)$.
\end{proof}

\begin{remark}
We must notice that for the case that we want to consider, that is, when $M$ is simply
connected and $8$-dimensional,
%%only
the hypothesis of the first
part of the
%%theorem hold
%%and even then,
Theorem \ref{theo:blow up and G-products}
only hold if we are blowing up
along a symplectic submanifold of dimension 2 and $a$ and $b_i$
are forms of degree $2$. The second item was included for sake of
completeness and can only happen in higher dimensions. Indeed, the
first even-dimension where $a$-Massey products can appear is $8$,
hence in order for item 2 of the theorem above to be used one
should need the ambient manifold to be at least $20$-dimensional.
\end{remark}

%%%%%%%%%%%%%%%%%%%%%%%%%%%%%%%%%%%%%%%%%%%%%%%%%%%%%%%%%
\section{Examples}\label{examples}
%%%%%%%%%%%%%%%%%%%%%%%%%%%%%%%%%%%%%%%%%%%%%%%%%%%%%%%%%

%As mentioned before, the question of formality of simply connected
%symplectic manifolds was settled in dimensions greater than 8 by
%Babenko and Taimanov in \cite{BT1,BT2}, where they produce
%families of non-formal symplectic manifolds via symplectic
%blow-ups. Given Merkulov's result on the equivalence of a
%symplectic version of the $dd^c$-lemma and the Lefschetz property
%\cite{Me98}, there remained a question of whether symplectic
%manifolds satisfying this property would be formal. This was also
%answered in the negative in dimensions greater than 8 by the first
%author in \cite{Cav2}, where examples where obtained by a more
%careful study of the symplectic blow-up. Both questions remained
%open in dimension 8.
%
%More recently, the technique of symplectic resolution of orbifold
%singularities presented in the previous section was used by the
%last two authors in \cite{FM4} to produce the first example of
%simply connected, non-formal symplectic 8-manifold. The example on
%that paper however failed to satisfy the Lefschetz property
%(symplectic analogue of the $dd^c$-lemma for complex manifolds).
%In this section we recall the example given in \cite{FM4} and show
%how to use the results about symplectic blow-up from \cite{Cav2}
%to change it so that we can produce a symplectic manifold which
%also satisfies the Leschetz property.

In this section we  give an example of a simply connected
symplectic $8$-manifold which satisfies the Lefschetz property but
%%is not formal. In order to explain our example, we recall an
is not formal. In order to explain our example, we recall the
example given by the last two authors in \cite{FM4}.

\begin{example}[\cite{FM4}]
Consider $G$, the product of the complex Heisenberg group $H$,
with $\CC$. As a manifold $G$ is  diffeomorphic to $\CC^4$ but
with a group structure induced by the following embedding in
$\mathrm{GL}(5,\CC)$
 $$
 (z_1,z_2,z_3,z_4) \mapsto
 \begin{pmatrix} 1   &    z_1    &    z_3    &   0   &0\\
            0   &   1   &   z_2 &   0   &0\\
            0   &   0   &   1   &   0   &0\\
            0   &   0   &   0   &   1   &z_4\\
            0   &   0   &   0   &   0   &1\\
 \end{pmatrix}.
 $$
Letting $\xi$ be a cubic root of $1$, we have $1$ and $\xi$ generate a
lattice $\Lambda \subset \CC$ and then we obtain a cocompact
lattice  $\Gamma \subset G$ given by the matrices whose entries
lie in $\Lambda$. Further, the map
 $$
 \rho:G\into G, \qquad
 \rho(z_1,z_2,z_3,z_4) = (\xi z_1,\xi z_2,\xi^2 z_3,\xi z_4),
 $$
generates a $\ZZ_3$ action on $G$ which preserves  the lattice
$\Gamma$ and the group structure. Therefore it induces a $\ZZ_3$
action on the compact nilmanifold $\Gamma \setminus G$. This
action is free away from 81 fixed points corresponding to $z_i =
n/(1-\xi)$, for $n = 0,1$ and $2$.

The orbifold $M = \Gamma \setminus G /\ZZ_3$ has a symplectic
structure.  Indeed, if we consider the left-invariant complex 1-forms
$u^1 = dz_1, u^2 = dz_2, u^3 = dz_3 - z_1 dz_2, u^4 = dz_4$
defined on $G$, we see that  $du^1 = du^2= du^4 =0$, $du^3
=u^{12}$ and that
 $$
 \omega =  i u^{1\bar1} + u^{23} + u^{\bar2\bar3} + i u^{4\bar4}
 $$
is a $(\Gamma\times \ZZ_3)$-invariant symplectic 2-form, hence
induces a symplectic structure on the orbifold $M$, where we are
using the sort hand notation $u^{ij} = u^i \wedge u^j$, $u^{\bar
i}=\overline{u^i}$, $u^{i\bar j} = u^i \wedge \overline{u^j}$,
etc. The orbifold $M$ is simply connected and has vanishing odd
Betti numbers \cite{FM4}. Furthermore, it has a nonvanishing
$a$-Massey product. Indeed, if we let
 $$
 a = u^{1\bar1}, b_1 = u^{2\bar2}, b_2=u^{2\bar4}, b_3 = u^{\bar2 4},
 $$
then $a$ and $b_i$ are closed and invariant under the $\ZZ_3$
action, so define closed forms on $M$. Further
 $$
 a \wedge b_1 = -du^{12\bar3}, a\wedge b_2 = du^{\bar1 3 \bar4},
 a\wedge b_3 = -du^{1\bar3 4}.
 $$
Hence we can compute the $a$-Massey product
 \begin{align*}
 \IP{a;b_1,b_2,b_3} &= -u^{12\bar 3}\wedge u^{\bar1 3 \bar 4} \wedge
 u^{\bar2 4} - u^{\bar1 3 \bar4}\wedge u^{1\bar3 4}\wedge u^{2\bar2}
 + u^{1\bar3 4}\wedge u^{12\bar3}\wedge u^{2\bar4}\\
 &=2\, u^{1\bar1 2\bar2 3\bar 3 4 \bar4}.
\end{align*}
Since $H^5(M) = \{0\}$, the Massey products $\IP{b_i,a,b_j} \in
H^5(M)$ vanish and, according to Proposition \ref{prop:indet}, the
product above has indeterminacy zero, thus it is a non-trivial
$a$-Massey product. Finally, according to Theorems
\ref{theo:resolution} and \ref{theo:resolution-pseudo}, the
symplectic resolution of $M$ is a simply connected non-formal
8-dimensional symplectic manifold.
\end{example}

\begin{example}
As shown in \cite{FM4}, the orbifold $M$  obtained in the previous
example has vanishing odd Betti numbers, so in order to check
whether it satisfies the Lefschetz property, one only needs to
consider $[\omega]^2: H^2(M) \into H^6(M)$.
%%In this example
We show
that while for $M$ this map is not an isomorpism, one can blow $M$
up along three symplectic tori to obtain an orbifold which does
satisfy the Lefschetz property, but which still has non-trivial
$a$-Massey products.

We start determining the second cohomology of $M$. This is given
by the $\ZZ_3$-invariant part of the Lie algebra cohomology of
$G$ and has an ordered basis given by
 $$
 \{u^{1\bar1} ,u^{4\bar4} ,u^{23} ,u^{\bar2\bar3} ,u^{\bar1 2}
 ,u^{13} ,u^{1\bar2} ,u^{\bar1\bar3} ,u^{1\bar4} ,u^{\bar1 4}
 ,u^{2\bar2} ,u^{2\bar4} ,u^{\bar2 4} \},
 $$
where $u^i$ are the invariant 1-forms introduced in the previous
example. In this basis the pairing $[\omega]^2:H^{2}(M;\CC)\times
H^{2}(M;\CC) \into \CC$ is given by the following table

\bigskip

%$$\begin{pmatrix}
%0  &-1 &-i &-i &   &   &   &   &   &   &   &   &   \\
%1  &0  &-i &-i &   &   &   &   &   &   &   &   &   \\
%-i &-i &0  &1  &   &   &   &   &   &   &   &   &   \\
%-i &-i &1  &0  &   &   &   &   &   &   &   &   &   \\
%   &   &   &   &0  &-i &   &   &   &   &   &   &   \\
%   &   &   &   &-i &0  &   &   &   &   &   &   &   \\
%   &   &   &   &   &   &0  &i  &   &   &   &   &   \\
%   &   &   &   &   &   &i  &0  &   &   &   &   &   \\
%   &   &   &   &   &   &   &   &0  &-1 &   &   &   \\
%   &   &   &   &   &   &   &   &-1 &0  &   &   &   \\
%   &   &   &   &   &   &   &   &   &   &0  &   &   \\
%   &   &   &   &   &   &   &   &   &   &   &0  &   \\
%   &   &   &   &   &   &   &   &   &   &   &   &0  \\
%\end{pmatrix}$$
\setlength{\extrarowheight}{4pt}
\begin{tabular}{|c|c|c|c|c|c|c|c|c|c|c|c|c|c|}
 \hline
& $u^{1\bar1} $&$u^{4\bar4} $&$u^{23} $&$u^{\bar2\bar3}
$&$u^{\bar1 2} $&$u^{13} $&$u^{1\bar2} $&$u^{\bar1\bar3}
$&$u^{1\bar4}
$&$u^{\bar1 4} $&$u^{2\bar2} $&$u^{2\bar4} $&$u^{\bar2 4} $ \\
\hline $u^{1\bar1} $       &$\mathbf{0}$   &$\mathbf{-1}$
&$\mathbf{-i}$      &$\mathbf{-i}$      &0      &0      &0      &0
&0      &0      &0      &0      &0      \\ \hline $u^{4\bar4} $
&$\mathbf{1}$   &$\mathbf{0}$       &$\mathbf{-i}$ &$\mathbf{-i}$
&0      &0      &0      &0      &0      &0 &0      &0      &0
\\ \hline $u^{23}     $ &$\mathbf{-i}$  &$\mathbf{-i}$
&$\mathbf{0}$ &$\mathbf{1}$       &0      &0      &0      &0
&0      &0 &0      &0      &0      \\ \hline $u^{\bar2\bar3} $
&$\mathbf{-i}$  &$\mathbf{-i}$      &$\mathbf{1}$ &$\mathbf{0}$
&0      &0      &0      &0      &0      &0 &0      &0      &0
\\ \hline $u^{\bar1 2} $      &0      &0 &0      &0
&$\mathbf{0}$       &$\mathbf{-i}$      &0      &0 &0      &0
&0      &0      &0      \\ \hline $u^{13} $ &0      &0      &0
&0      &$\mathbf{-i}$      &$\mathbf{0}$ &0      &0      &0
&0      &0      &0      &0      \\ \hline $u^{1\bar2} $       &0
&0      &0      &0      &0      &0 &$\mathbf{0}$       &$\mathbf{
i}$      &0      &0      &0      &0 &0      \\ \hline
$u^{\bar1\bar3} $   &0      &0      &0      &0 &0      &0
&$\mathbf{ i}$      &$\mathbf{0}$       &0      &0 &0      &0
&0      \\ \hline $u^{1\bar4} $       &0      &0 &0      &0
&0      &0      &0      &0      &$\mathbf{0}$ &$\mathbf{-1}$
&0      &0      &0      \\ \hline $u^{\bar1 4}$       &0      &0
&0      &0      &0      &0      &0 &0      &$\mathbf{-1}$
&$\mathbf{0}$       &0      &0      &0
\\ \hline $u^{2\bar2} $       &0      &0      &0      &0      &0
&0      &0      &0      &0      &0      &\bf0       &0      &0
\\ \hline $u^{2\bar4} $       &0      &0      &0      &0      &0
&0      &0      &0      &0      &0      &0      &\bf0       &0
\\ \hline $u^{\bar2 4}$       &0      &0      &0      &0      &0
&0      &0      &0      &0      &0      &0      &0      &\bf0
\\ \hline
\end{tabular}

\bigskip

Hence, the kernel of $\omega^2$ has real basis $\{iu^{2\bar2}$,
$u^{2\bar4}+u^{\bar2 4}$, $i(u^{2\bar4}-u^{\bar2 4})\}$. Now we
split each $u^i$ into real and imaginary parts $u^j =
e^{2j-1}+ie^{2j}$, so that the kernel of $\omega^2$ is generated
by $e^{34}$, $e^{37}-e^{48}$ and $e^{47}+e^{38}$.

In terms of the real basis $\{e_i\}$, where $e_i$ is the invariant
vector field dual to $e^i$, the Lie algebra $\frak{g}$ of  $G$ has
the following structure:
 $$
 -[e_1,e_3]= - [e_2,e_4] = e_5, \qquad  -[e_1,e_4]= - [e_2,e_3] = e_6,
 $$
and the symplectic form is
 $$
 \omega = e^{12} + e^{35}-e^{46} +e^{78}.
 $$
Observe that since the lattice $\Gamma$ is given by matrices whose
entries are in the lattice $\Lambda$ generated by $1$ and $\xi$,
the vector fields $e_{2i-1}$ have period 1 ($1 \in \Lambda$),
while the vector fields $e_{2i}$ have period $\sqrt{3} =
\frac{1+2\xi}{i}$ (note that $1+2\xi \in \Lambda$).

For the example at hand, we consider the abelian Lie  subalgebras
of  $\frak{g}$ generated by
 $$
 \{e_3+e_7,e_4+e_8 \}\mbox{, }\{e_3 +\sqrt{3}\, e_8,e_7\} \
 \mbox{ and } \ \{e_3+e_7, e_8\}.
 $$
Each of these Lie algebras integrates to a Lie subgroup of $G$ and
the lattice $\Gamma$ restricts to a cocompact lattice on each of
the subgroups. Therefore, each of the abelian algebras gives rise
to a fibration of $\Gamma\setminus G$ by embedded tori. One can
clearly see that these tori are symplectic and by a general
position argument, we can choose three tori, $T_i$, one torus on
each family,  so that they do not intersect each other and also
they do not pass through the fixed points of the $\ZZ_3$ action.
%%Therefore their image via the quotient map $\Gamma\setminus G
 Thus, their image via the quotient map $\Gamma\setminus G
\into M$
%% is
are three disjoint embedded tori which do not meet the
orbifold singularities.

%%According to Theorem \ref{theo:blow up and Lefschetz}, there is a
By Theorem \ref{theo:blow up and Lefschetz}, there is a
symplectic form $\tilde\omega$ on $\tilde{M}$,  the blow-up of $M$
along the three tori, such that the kernel of $[\tilde
\omega]^2:H^2(\tilde{M}) \into H^6(\tilde{M})$ is given by
 $$
 \pi^*(\ker([\omega^2]\cup)\cap \ker([T_1]\cap)\cap \ker([T_2] \cap)\cap \ker([T_3]\cap),
 $$
but by choice, each of the $[T_i]$ pairs nontrivially with one of
the elements in the basis $\{e^{34},
e^{37}-e^{48},e^{47}+e^{38}\}$ for $\ker([\omega^2]\cup)$. Hence
$\tilde \omega^2:H^2(\tilde{M}) \into H^6(\tilde{M})$ is an
isomorphism and the orbifold $\tilde{M}$ satisfies the Lefschetz
%%property. Further, according to Theorem \ref{theo:blow up and
property. Further, taking account Theorem \ref{theo:blow up and
G-products}, $\tilde{M}$ has a nontrivial $a$-Massey product,
hence it is not formal.

According to Theorems \ref{theo:resolution}, \ref{thm:3.9} and
\ref{theo:resolution-pseudo}, the symplectic resolution of
$\tilde{M}$ satisfies the Lefschetz property and has a non-trivial
$a$-Massey product.
%%This example therefore shows that the
This example shows that the
Lefschetz property is not related to formality in dimension $8$.
\end{example}

{\small

\vspace{0.15cm}

\noindent{\sf G. R. Cavalcanti:} Mathematical Institute, St. Giles 24 -- 29, Oxford OX1 3LB, United Kingdom.

\noindent
{\it e-mail:} {\tt gil.cavalcanti@maths.ox.ac.uk}

\vspace{0.15cm}

\noindent{\sf M. Fern\'andez:} Departamento de Matem\'aticas,
Facultad de Ciencia y Tecnolog\'{\i}a, Universidad del Pa\'{\i}s
Vasco, Apartado 644, 48080 Bilbao, Spain.

\noindent
{\it e-mail:} {\tt marisa.fernandez@ehu.es}

\vspace{0.15cm}

\noindent{\sf V. Mu\~noz:} Departamento de Matem\'aticas, Consejo
Superior de Investigaciones Cient{\'\i}ficas, C/ Serrano 113bis, 28006
Madrid, Spain.

\noindent
\phantom{V. Mu\~noz:} Facultad de Matem{\'a}ticas, Universidad Complutense de Madrid, Plaza
de Ciencias 3, 28040 Madrid, Spain.

\noindent
{\it e-mail:} {\tt vicente.munoz@imaff.cfmac.csic.es}}

\end{document}